\documentclass[12pt, oneside]{amsart}
\usepackage{latexsym}
\usepackage{layout}
\usepackage{amsfonts}
\usepackage{amssymb}
\usepackage{epsfig}
\usepackage{color}
\usepackage{cite}

\usepackage[normalem]{ulem}
\usepackage[perpage,para,symbol*]{footmisc}
\usepackage{fancyhdr}
\usepackage{times}
\usepackage{lipsum} % only for showing some sample text
\fancyhf{} % clear all header and footers
 % remove the header rule
\cfoot{\qquad\qquad\qquad\qquad\thepage}
%
%lfoot{\thepage} % puts it on the left side instead
%
% or if your document is 2 sided, and you want even and odd placement of the number
%\fancyfoot[CE,CO]{\thepage}%
\pagestyle{fancy}
%\addtolength{\oddsidemargin}{-.6in}
%\addtolength{\evensidemargin}{-.4in}
%\textheight8.4truein
%\textwidth6.8truein
%\newcommand{\link}[1]{(\hyperref[#1]{\color{red} #1})}
\newcommand{\R}{\mathbb{R}}

\numberwithin{equation}{section}

\theoremstyle{plain}
\newtheorem{lem}[equation]{Lemma}
\newtheorem{thm}[equation]{Theorem}

\newtheorem{prop}[equation]{Proposition}
\newtheorem{cor}[equation]{Corollary}

\theoremstyle{definition}
\newtheorem{defi}[equation]{Definition}

\theoremstyle{remark}
\newtheorem{remark}[equation]{Remark}

\linespread{1.45}
\usepackage{verbatim}
\usepackage[margin=1.15in]{geometry}
\usepackage[hidelinks]{hyperref} %removes the boxes around links, theoretically.
\hypersetup{
    colorlinks=true,
    %pdfborder={0 0 0},
}

\setlength{\footskip}{30pt}

\begin{document}
\title{Extending Erd\H{o}s- Beck's theorem to higher dimensions}
\author{Thao Do$^1$}
\date{March 2017\\$^1$ Department of Mathematics, MIT, Cambridge MA 02139, USA. Email: thaodo@mit.edu }

\maketitle

\begin{abstract}
Erd\H{o}s-Beck theorem states that $n$ points in the plane with at most $n-x$ points collinear define at least $c xn$ lines for some positive constant $c$. It implies $n$ points in the plane define $\Theta(n^2)$ lines unless most of the points (i.e. $n-o(n)$ points) are collinear. 

In this paper, we will present two ways to extend this result to higher dimensions. 
Given a set $S$ of $n$ points in $\mathbb{R}^d$, we want to estimate a lower bound of the number of hyperplanes they define (a hyperplane is defined or spanned by $S$ if it contains $d+1$ points of $S$ in general position). Our first result says the number of spanned hyperplanes is at least $cxn^{d-1}$ if there exists some hyperplane that contains $n-x$ points of $S$ and %those $n-x$ points span at least $\gamma n^{d-1}$ $(d-2)$-dim flats. 
saturated (as defined in Definition \ref{defi_saturated}). Our second result says $n$ points in $\R^d$ define $\Theta(n^d)$ hyperplanes unless most of the points belong to the union of a collection of flats whose sum of dimension is strictly less than $d$.

Our result has application to point-hyperplane incidences and potential application to the point covering problem.

\end{abstract}

%%%%%%%%%%%%%%%%%%%%%%%%%%%%%%%%%%

\section{Introduction} 
%Given a set $S$ of $n$ points in $\mathbb{R}^d$ for some $d\geq 2$, we are interested in bounding the number of hyperplanes spanned by points in $S$, denoted by $H_S(\mathbb{R}^d)$. More generally, for any flat $F$ of dimension $k$ in $\mathbb{R}^d$, $H_S(F)$ denotes the number of $(k-1)-$flats inside $F$ that is spanned by $S\cap F$.

%IMPORTANT NOTE: THE AUTHOR RECENTLY LEARN THAT A SIMILAR RESULT HAS BEEN PROVED BY BEN LUND IN HIS PAPER \href
%{http://arxiv.org/abs/1602.08002}{ESSENTIAL DIMENSION AND THE FLATS SPANNED BY A POINT SET}. In particular, part one of theorem 2 in that paper implies theorem 1.7 here. The method of proof, however, appears to be different. 

Given a set $S$ of $n$ points in the plane, we say a line $l$ is a spanning line of $S$ (or $l$ is spanned by $S$)  if $l$ contains at least two distinct points of $S$. 
The following theorem was proposed by Erd\H{o}s and proved by Beck in \cite{Beck}:

\begin{thm}\label{Erdos-Beck} [Erd\H{o}s- Beck's theorem, 1983] Any set $S$ of $n$ points in $\R^2$ among which at most $n-x$ points are collinear spans at least $cxn$ lines for some positive constant $c$.
\end{thm}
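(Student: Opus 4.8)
The plan is to reduce the theorem to two regimes according to the size of the largest collinear subset. Let $t$ denote the maximum number of collinear points of $S$, so the hypothesis gives $t\le n-x$; equivalently, if $\ell_0$ is a fixed line carrying $t$ points, the set $Q$ of points lying off $\ell_0$ has size $u:=n-t\ge x$. Since $u\ge x$, it suffices to exhibit $c\,un$ spanning lines, and I would split on whether $t\ge n/2$ (the \emph{concentrated} case, where almost all points lie on one line) or $t<n/2$ (the \emph{spread} case).

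In the concentrated case I would argue purely combinatorially. Joining each $q\in Q$ to each of the $t$ points of $S\cap\ell_0$ produces $t$ distinct spanning lines, since any line through $q$ meets $\ell_0$ in a single point; hence, counting incidences between $Q$ and the ``transversal'' lines (those meeting both $Q$ and $S\cap\ell_0$),
\[
\sum_{\ell\ \mathrm{transversal}}\abs{\ell\cap Q}\;=\;u\,t .
\]
The number of \emph{distinct} transversal lines equals this sum minus the overcount $E:=\sum_{\ell}\bigl(\abs{\ell\cap Q}-1\bigr)$. Because $\abs{\ell\cap Q}-1\le\binom{\abs{\ell\cap Q}}{2}$ whenever $\abs{\ell\cap Q}\ge 2$, and every pair of points of $Q$ determines a unique line, we get $E\le\binom{u}{2}<u^2/2$. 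Using $u\le t$ (which holds precisely when $t\ge n/2$) gives $E<ut/2$, so $S$ spans at least $ut/2\ge un/4\ge xn/4$ lines.

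For the spread case I would invoke the Szemer\'edi--Trotter incidence bound, in the form that the number of lines meeting $S$ in at least $k$ points is $O(n^2/k^3+n/k)$. Starting from $\binom n2=\sum_\ell\binom{\abs{\ell\cap S}}{2}$, I would split the lines into \emph{sparse} (fewer than $C$ points), \emph{middle} ($[C,\epsilon n)$ points), and \emph{very rich} ($\ge\epsilon n$ points). A dyadic summation bounds the pairs on middle lines by $O\bigl((1/C+\epsilon)n^2\bigr)$. The very rich lines number only $O(1/\epsilon^2)$ and pairwise meet in at most one point, so $\sum_{\mathrm{rich}}\abs{\ell\cap S}\le n+O(1)$; maximizing $\sum\binom{\abs{\ell\cap S}}{2}$ under this constraint together with the cap $\abs{\ell\cap S}\le t\le n/2$ shows they carry at most $\tfrac12\binom n2(1+o(1))$ of the pairs. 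Choosing $C$ large and $\epsilon$ small then leaves a constant fraction of the $\binom n2$ pairs on sparse lines, each accounting for fewer than $\binom C2$ of them, which forces $\Omega(n^2)\ge\Omega(un)\ge\Omega(xn)$ spanning lines.

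The main obstacle is the very-rich-line estimate in the spread case. A naive dyadic use of Szemer\'edi--Trotter overcounts their contribution: the bound $O(n/k)$ on $k$-rich lines, multiplied by $\binom k2$ and summed near $k\approx t$, produces a spurious term of order $nt$ that can be as large as $\binom n2$ and would leave no pairs for the sparse lines. The remedy, and the delicate step, is to isolate the $O(1/\epsilon^2)$ lines with at least $\epsilon n$ points and bound their pair-count using only that two lines meet in at most one point, rather than through the raw incidence count. Checking that this genuinely reserves a constant fraction of all $\binom n2$ pairs for sparse lines, uniformly for every $t$ up to $n/2$ so that it dovetails with the elementary transversal count on the other side of the threshold, is where the argument must be carried out with care.
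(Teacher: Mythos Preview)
The paper does not give its own proof of this statement; Theorem~\ref{Erdos-Beck} is quoted from Beck~\cite{Beck} as background and motivation, so there is no in-paper argument to compare against directly. Your two-regime proof is correct and is essentially Beck's original approach.

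That said, your concentrated case is worth comparing with Section~2 of the paper. Theorem~\ref{E-B_d_dim} specialized to $d=2$ (where every line is trivially $1$-saturated) is exactly your concentrated regime: a $c_2n$-rich line $P$ yields $\gtrsim xn$ spanning lines with $x=|S\setminus P|$. The paper proves this by pairing each outside point with each point of $S\cap P$, then controlling the multiplicity via Cauchy--Schwarz on the quantities $a_i=|P_i\cap X|$ together with the bound $J\le x^2$ of Lemma~\ref{lem_fix_point}. You reach the same conclusion by the more direct overcount estimate $E\le\binom{u}{2}\le ut/2$. Your version is cleaner in the plane; the paper's Cauchy--Schwarz formulation is what survives to higher $d$, where each outside point is paired with a $(d-2)$-flat rather than a single point and the simple pair bound on $E$ is no longer available.

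For the spread case you essentially reprove Beck's theorem (Theorem~\ref{Beck} for $d=2$) via dyadic Szemer\'edi--Trotter plus a convexity bound on the very rich lines; the paper instead invokes Beck's theorem as a black box throughout. Your isolation of the $\epsilon n$-rich lines and the bound $\sum_{\text{rich}}\binom{|\ell\cap S|}{2}\le\tfrac12\binom{n}{2}(1+o(1))$ from $\sum|\ell\cap S|\le n+O_\epsilon(1)$ and the cap $|\ell\cap S|\le n/2$ is correct: the convexity maximum $\sum a_i^2\le Mt$ with $M\approx n$, $t\le n/2$ gives exactly the factor $1/2$ you claim, leaving a positive fraction of pairs on sparse lines.
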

As a corollary:
\begin{cor}\label{Beck2}
On the plane, for each $\beta\in (0,1)$, there exists $\gamma>0$ depending on $\beta$ such that for any set $S$ of $n$ points, either a line contains $\beta n$ points of $S$, or the number of spanning lines exceeds $\gamma n^2$.
\end{cor}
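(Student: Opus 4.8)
The plan is to derive Corollary~\ref{Beck2} directly from Theorem~\ref{Erdos-Beck} by a counting argument. The key observation is that the quantity $x$ in the Erd\H{o}s--Beck theorem measures how far the point set is from being degenerate: if no line contains more than $\beta n$ points, then any collinear subset has size at most $\beta n$, so with at most $n-x$ points collinear we may take $x = n - \beta n = (1-\beta)n$. Substituting this into the lower bound $cxn$ from Theorem~\ref{Erdos-Beck} should immediately yield a bound of the form $c(1-\beta)n^2$ on the number of spanning lines, which is quadratic in $n$ as desired.

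More precisely, first I would fix $\beta \in (0,1)$ and suppose that no line contains at least $\beta n$ points of $S$; the goal is then to show the number of spanning lines exceeds $\gamma n^2$ for a suitable $\gamma = \gamma(\beta)$. Under this assumption, the maximum number of collinear points is strictly less than $\beta n$, which means $S$ satisfies the hypothesis of Theorem~\ref{Erdos-Beck} with $x$ at least $(1-\beta)n$ (up to rounding). Applying the theorem gives at least $cxn \geq c(1-\beta)n^2$ spanning lines. Setting $\gamma$ to be any constant strictly smaller than $c(1-\beta)$, for instance $\gamma = c(1-\beta)/2$, then gives the strict inequality claimed in the corollary for all sufficiently large $n$.

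I expect the only subtlety to be bookkeeping around the strict-versus-nonstrict inequalities and the floor/ceiling issues when $\beta n$ is not an integer, together with handling small values of $n$ where the asymptotic constant might not immediately give a strict bound. These are routine to absorb into the choice of $\gamma$: one can shrink $\gamma$ slightly to accommodate the finitely many small cases, or observe that the statement is vacuous or trivially satisfied when $n$ is bounded. The genuinely mathematical content is entirely contained in Theorem~\ref{Erdos-Beck}, so the main task here is simply the correct translation between the ``at most $n-x$ collinear'' formulation and the ``no line with $\beta n$ points'' formulation, identifying $x \approx (1-\beta)n$.
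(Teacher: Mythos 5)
Your derivation is correct and is exactly the intended (and only natural) way to obtain Corollary~\ref{Beck2} from Theorem~\ref{Erdos-Beck}: the paper states the corollary without proof precisely because the substitution $x=(1-\beta)n$ is immediate. The bookkeeping issues you flag (rounding, strictness, small $n$) are indeed absorbed into the choice of $\gamma$.
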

Indeed, if no line contains $\beta n$ points of $S$ then by theorem \ref{Erdos-Beck}, the number of spanning lines is at least $c(n-\beta n)n= c(1-\beta)n^2$, so we can take $\gamma= c(1-\beta)$.

Erd\H{o}-Beck's theorem is one of the most well-known applications of the celebrated Szemer\'edi-Trotter theorem \cite{ST}, even though in his original proof, Beck does not use this theorem but a weaker version of it. This theorem in turn has many applications to other geometric problems such as in point-hyperplane incidences (see \cite{Eleke}), in various problems involving volumes of tetrahedra (see \cite{Toth-unit-vol}). 

There are two directions to extend Erd\H{o}s-Beck's theorem: to other fields or to higher dimensions. Since Szemer\'edi-Trotter theorem does not hold in finite fields, we only obtain partial results (see for example \cite{IRZ}, \cite{Jones}). As mentioned in section 2.2 in \cite{Eleke}, not much is known in higher dimensions. 
%Many people are interested in extending this result to other settings such as in finite fields , or in higher dimensions. 
%Another way to extend theorem \ref{Erdos-Beck} to high dimension is to use saturated flats. 
%More generally, for any flat $F\subset \R^d$ of dimension $k$ ($k\leq d$), a hyperplane $P$ of $F$ is $S$-spanned if it contains at least $k$ points in general position of $S\cap F$. Let $H_S(F)$ denote the number of $S$-spanned hyperplanes in $F$. We say $F$ is $\gamma$-saturated w.r.t. $S$ if $H_S(F)\ge \gamma n^k$. Being saturated is an important property
In this paper, we will present two ways to extend this result to $\R^d$ for any $d\geq 3$, the first one resembles theorem \ref{Erdos-Beck} and the second one  resembles corollary \ref{Beck2}. 

From now on, $d$ is some fixed integer $(d\geq 2)$ and $n$ is rather big compared to $d$.
Before stating the first result, we need to define rich and saturated flats. 
Saturated hyperplanes was introduced in \cite{Eleke} in order to obtain some meaningful point-hyperplane incidence bounds. In this paper, we extend this idea to saturated flats.

%Roughly speaking $n$ points in $\R^2$ span $\Theta(n^2)$ lines unless $n-o(n)$ points are collinear. As noted in \cite{Eleke}, much less is known in higher dimension. Given a finite set $S\subset \R^d$, $|S|=n$, we say a hyperplane $P$ is spanned by $S$ if $P$ contains at least $d$ points in general position of $S$. The  following theorem is also proved in \cite{Beck}
\begin{defi}\label{defi_saturated} Given a set $S$ of $n$ points  and a $k$-dimensional flat $F$ in $\R^d$.
Let \textbf{$H_S(F)$} denote the number of  $(k-1)$-dimensional hyperplane in $F$ spanned by $S\cap F$. $F$ is called \textbf{$k$-rich} if  $|F\cap S|\geq k$; $F$ is called \textbf{$\gamma$-saturated} if $H_S(F)\geq \gamma |F\cap S|^{d-1}$. 

We say $F$ is saturated if there exists some $\gamma>0$ such that $F$ is $\gamma$-saturated. We say $F$ is rich if there exists some $c>0$ such that $F$ is $c|S|$-rich.
\end{defi}

%It is easy to see a line is always $1$-saturated.  A plane $P$ in $\mathbb{R}^d$ is saturated if the restriction of $S$ to $P$ spans many lines.  Theorem \ref{thm_3_dim} can be understood as: the space $\mathbb{R}^3$ is saturated unless most of the points belong to a plane or to two skew lines. 

%The main results of this paper are in the following two theorems, the first one generalizes Erd\"os-Beck's theorem, and the second one generalizes corollary \ref{Beck2}. From now on, $d$ is some fixed integer $(d\geq 2)$ and $n$ is rather big compared to $d$.
Our first main result is the following:
\begin{thm}\label{E-B_d_dim}
Assume  $S$ is a set of $n$ points in $\mathbb{R}^d$, and there is some $c_1$-saturated $c_2n$-rich hyperplane $P$, then there is some positive constant $\gamma$ depending on $c_1$ and $c_2$ such that $H_S(\R^d)\ge \gamma xn^{d-1}$ where $x=|S\setminus P|$.
\end{thm}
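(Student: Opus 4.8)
The plan is to manufacture many spanning hyperplanes of $\R^d$ out of the rich sub-structure carried by $P$ and the points off $P$, and then to convert a weighted count of these hyperplanes into a count of \emph{distinct} ones.

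First I would set up the construction. For each point $q\in S\setminus P$ and each $(d-2)$-flat $h\subseteq P$ spanned by $S\cap P$, let $H(q,h)$ be the affine hull $\mathrm{aff}(h\cup\{q\})$. Since $q\notin P\supseteq h$, the point $q$ lies off the $(d-2)$-flat $h$, so $H(q,h)$ is a genuine hyperplane; it contains the $\ge d-1$ affinely independent points of $S$ that span $h$ together with $q$, hence is spanned by $S$. The key structural fact is that $H(q,h)\cap P=h$: the intersection contains $h$ and is a proper subflat of $P$ (because $q\in H(q,h)\setminus P$), so it has dimension exactly $d-2$ and therefore equals $h$. Thus $h$ is recovered from $H(q,h)$, and the number of pairs $(q,h)$ is $x\cdot H_S(P)\ge c_1(c_2n)^{d-1}x=c_1c_2^{\,d-1}\,x\,n^{d-1}$. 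Collecting pairs by their image, each spanning hyperplane $H$ is produced exactly $m_H:=\abs{(S\setminus P)\cap H}$ times, so $\sum_H m_H=x\,H_S(P)$.

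Next I would pass from this weighted count to the number of distinct hyperplanes. Since every $H(q,h)$ is a spanning hyperplane, the number $N$ of distinct ones satisfies $H_S(\R^d)\ge N$, and by Cauchy--Schwarz $N\ge(\sum_H m_H)^2/\sum_H m_H^2$. The denominator has a clean geometric meaning: for a fixed $h$, two points $q,q'\in S\setminus P$ satisfy $H(q,h)=H(q',h)$ if and only if the trace point $p_{qq'}:=\overline{qq'}\cap P$ lies on $h$ (indeed $q'\in H(q,h)$ iff the line $\overline{qq'}$ lies in $H(q,h)$, iff $p_{qq'}\in H(q,h)\cap P=h$). Writing $g(p)$ for the number of spanned $(d-2)$-flats of $P$ through a point $p$, this gives $\sum_H m_H^2=x\,H_S(P)+2\sum_{\{q,q'\}}g(p_{qq'})$, the sum ranging over unordered pairs in $S\setminus P$ (pairs whose line is parallel to $P$ are never merged and may be discarded). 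Consequently the whole theorem reduces to the single estimate
\[
\sum_{\{q,q'\}\subseteq S\setminus P} g(p_{qq'})\ \le\ C\,x\,H_S(P)
\]
for a constant $C=C(c_1,c_2,d)$; plugging this in yields $H_S(\R^d)\ge\frac{1}{1+2C}\,x\,H_S(P)\ge\gamma\,x\,n^{d-1}$ with $\gamma=c_1c_2^{\,d-1}/(1+2C)$.

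The main obstacle is precisely this last estimate, which says that few hyperplanes are \emph{rich in points off $P$} (few have large $m_H$), equivalently that the trace points $p_{qq'}$ rarely land on a point through which many spanned flats of $P$ pass. Here pure double counting stops and genuine incidence geometry is needed, so I would argue by induction on the dimension $d$ with the planar Erd\H{o}s--Beck theorem (Theorem \ref{Erdos-Beck}) as the base case. The idea is to first peel off, for each $h$, the at most $1/\delta$ hyperplanes through $h$ that carry at least $\delta x$ points off $P$: each such $H$ is itself a rich hyperplane inside which $h=H\cap P$ plays the role that $P$ plays in $\R^d$, so its contribution is controlled by the $(d-1)$-dimensional instance of the theorem; the remaining low-multiplicity hyperplanes are then bounded by controlling $g$ along their trace points using the saturation of $S\cap P$ inside $P\cong\R^{d-1}$. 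Making this peeling uniform in $\delta$, and checking that the peeled hyperplanes actually inherit the richness and saturation needed to apply the inductive hypothesis, is the delicate technical heart of the argument.
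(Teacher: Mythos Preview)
Your setup and the Cauchy--Schwarz reduction are exactly the paper's argument: the paper also forms the hyperplanes $H(q,h)$, writes $\sum_H m_H = x\,H_S(P)$, and controls $\sum_H m_H^2$ by bounding the number of triples $(q,q',H)$ with $q,q'\in (S\setminus P)\cap H$. Your identity $\sum_H m_H^2 = x\,H_S(P) + 2\sum_{\{q,q'\}} g(p_{qq'})$ is just a rewriting of the paper's quantity $J$.

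The gap is at your ``main obstacle'': you claim that bounding $\sum_{\{q,q'\}} g(p_{qq'})$ requires genuine incidence geometry and propose an inductive peeling argument. In fact this step is a one-line counting bound, and this is precisely what the paper does (Lemma~\ref{lem_fix_point}). Any spanned $(d-2)$-flat of $P$ through a fixed point $p\in P$ is determined by $d-2$ further points of $S\cap P$ that, together with $p$, affinely span it; hence $g(p)\le \binom{\lvert S\cap P\rvert}{d-2}< n^{d-2}$ uniformly in $p$. Therefore
\[
\sum_{\{q,q'\}\subseteq S\setminus P} g(p_{qq'}) \;\le\; \binom{x}{2}\,n^{d-2} \;<\; x^2 n^{d-2},
\]
and since $H_S(P)\ge c_1c_2^{\,d-1}n^{d-1}$ and $x\le n$, this is at most $C\,x\,H_S(P)$ with $C=(c_1c_2^{\,d-1})^{-1}$. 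Your own formula then gives $H_S(\R^d)\ge \gamma\,x\,n^{d-1}$ immediately. No induction, no peeling, no Erd\H{o}s--Beck base case is needed; the inductive scheme you sketch is both unnecessary and, as written, not complete (you do not verify that the peeled hyperplanes inherit saturation, and the ``uniform in $\delta$'' claim is left open). Working projectively, as the paper does, also disposes of the parallel-line caveat you flagged.
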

When $d=2$, since any line is saturated, we get back Erd\H{o}s-Beck's theorem.
To obtain a result similar to corollary \ref{Beck2}, we start with another classical result in \cite{Beck}:

%In high dimensions, in the same paper that proved theorem \ref{Erdos-Beck}, Beck proved the following:

\begin{thm}\label{Beck}
[Beck's theorem, 1983] Given an integer $d\geq 2$, there are constants $\beta_d, \gamma_d$ in $(0,1)$ such that given any set $S$ of $n$ points in $\mathbb{R}^d$, either there exists a hyperplane that contains at least $\beta_d n$ points of $S$ or the number of spanning hyperplanes is at least $\gamma_d n^d$.
\end{thm}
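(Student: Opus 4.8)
The plan is to argue by induction on the dimension $d$, with the planar Beck's theorem as the base case and a double-counting argument controlled by a Szemer\'edi--Trotter-type bound on rich hyperplanes for the inductive step. For $d=2$ the statement is the classical planar result, which follows from the Szemer\'edi--Trotter theorem in the form that an $n$-point set spans only $O(n^2/k^3+n/k)$ lines with at least $k$ points. For the inductive step, suppose the theorem holds in $\R^{d-1}$ and let $S\subset\R^d$, $|S|=n$, have no hyperplane containing $\beta_d n$ points; write $N_h=|h\cap S|$ and let $L$ be the number of spanning hyperplanes. I would first estimate $T$, the number of affinely independent $d$-subsets of $S$. Every $k$-flat with $k\le d-1$ lies in a hyperplane, so the hypothesis forbids any $(d-2)$-flat from carrying $\beta_d n$ points; since a $d$-subset fails to be affinely independent exactly when it lies in a $(d-2)$-flat, a routine count bounds the number of dependent $d$-subsets by $C\beta_d\binom{n}{d}$, whence $T\ge \tfrac12\binom{n}{d}$ once $\beta_d$ is small.

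The heart of the argument is the identity $T=\sum_{h}t_h$, summed over spanning hyperplanes, where $t_h\le\binom{N_h}{d}$ is the number of $d$-subsets of $h\cap S$ spanning $h$ (each affinely independent $d$-subset spans a unique hyperplane). I would split the hyperplanes into poor ones with $N_h<K$ and rich ones with $N_h\ge K$, for a large constant $K$. The poor contribution is at most $L\binom{K}{d}$, so it suffices to show that the rich contribution is at most $\tfrac12 T$. Decomposing the rich hyperplanes dyadically according to $N_h\in[2^j,2^{j+1})$, this reduces to the estimate $H_{\ge k}\le C n^d/k^{d+1}$ for the number $H_{\ge k}$ of hyperplanes carrying at least $k$ points, the exact analogue of the rich-lines bound; it gives a rich contribution of order $n^d/K$, which is below $\tfrac12 T$ once $K$ is chosen large. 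Together with $T\ge\tfrac12\binom{n}{d}$ this yields $L\ge (T/2)/\binom{K}{d}\ge\gamma_d n^d$.

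The main obstacle is proving $H_{\ge k}\le Cn^d/k^{d+1}$ in $\R^d$, which is genuinely false without a non-degeneracy hypothesis: many points on a single line lie on unboundedly many common hyperplanes. This is the point--hyperplane incidence problem of Elekes--T\'oth, whose bound holds for hyperplanes that are not too degenerate, i.e.\ that do not concentrate a constant fraction of their points on a lower-dimensional flat. I would therefore separate rich hyperplanes into non-degenerate ones, to which the Elekes--T\'oth bound applies directly, and degenerate ones, each of which concentrates at least $\beta_{d-1}N_h$ points on some $(d-2)$-flat. A $(d-2)$-flat that rich is itself highly concentrated, and counting the hyperplanes through it---by projecting $\R^d$ onto the two-dimensional quotient by the flat and invoking the inductive (indeed planar) hypothesis---controls the degenerate contribution. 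Balancing the two contributions and tracking how $\beta_{d-1},\gamma_{d-1}$ determine $\beta_d,\gamma_d$ is the technical crux; the non-degenerate incidence estimate is where the real geometric difficulty lies.
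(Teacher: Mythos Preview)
The paper does not prove Theorem~\ref{Beck}: it is quoted from Beck's 1983 paper \cite{Beck} and used as a black box throughout (in Sections~3 and~4 it is invoked repeatedly to locate rich saturated lower-dimensional flats, and it is the base ingredient in the proof of Theorem~\ref{thm_d_dim}). So there is no proof in the paper to compare your proposal against.

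On the proposal itself: the architecture---induction on $d$, Szemer\'edi--Trotter at the base, counting affinely independent $d$-tuples via $T=\sum_h t_h$, and a dyadic split of hyperplanes by richness---is a reasonable route and in the spirit of how such results are proved. One caution on logical dependence: you appeal to the Elekes--T\'oth bound for rich non-degenerate hyperplanes, but in the present paper the $\alpha$-degenerate version of that bound (the second assertion in Theorem~\ref{Eleke-2}) is itself derived \emph{from} Beck's theorem, so quoting it directly would be circular. What is legitimate is to use only the $\gamma$-saturated version of the Elekes--T\'oth estimate and then invoke your inductive hypothesis (Beck in dimension $d-1$) to convert ``not too degenerate inside the hyperplane'' into ``saturated''; you should make that dependency explicit rather than leave it buried in the phrase ``the non-degenerate incidence estimate is where the real geometric difficulty lies.'' Note also that Beck's original argument predates the Elekes--T\'oth paper by two decades, so your sketch is a modern reconstruction rather than a paraphrase of the historical proof; Beck works more directly with projections and the planar theorem without isolating the rich-hyperplane bound $H_{\ge k}\lesssim n^d/k^{d+1}$ as a standalone lemma.
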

It may not be clear at first glance how this theorem  when $d=2$ is weaker than corollary \ref{Beck2}. To see the difference, let's ask
what is the maximum value of $\beta_d$ so that there is some $\gamma_d$ that satisfies the condition mentioned above? By corollary \ref{Beck2}, in two dimension, any $\beta_2<1$ would work.
However, this is no longer the case in higher dimensions. For example, in $\mathbb{R}^3$, consider two skew lines $l_1, l_2$ and the set $S$ consisting of $n/2$ points on each line (assuming $n$ is even). It is easy to see that any plane contains at most $n/2+1$ points of $S$, yet there are only $n$ spanning (hyper)planes. %Thus $\beta_3\leq 1/2$. 

It is proved  in \cite{Lund-Purdy} that two skew lines case is the only obstacle: a set of points in
$\R^3$
such that no more than
$n-x$
of which lie on a plane or any pair of skew lines, determines $\Omega(nk^2)$ planes. This means for any $\beta\in(0,1)$, if no plane or two skew lines contain more than $\beta n$ points then the number of spanning planes is $\Theta(n^3)$.
In this paper we will extend this idea to any dimension $d\geq 2$: %the only obstacle which prevents $n$ points from defining $\theta(n^d)$ hyperplanes is if most of the points lie in a collection of flats whose sum of dimensions is strictly less than $d$. For $d=2$ the only collection is a plane while for $d=3$ this collection can be a plane or two lines.

\begin{thm} \label{thm_d_dim}
For any $0<\beta<1$ there is some constant $\gamma(\beta)$ depending on $d$ and $\beta$ such that for any set $S$ of $n$ points in $\mathbb{R}^d$, either there exists a collection of flats $\{F_1,\cdots,F_k\}$ whose union contains at least $\beta n$ points of $S$ and $\sum_{i=1}^k \dim F_i<d$, or $H_S(\R^d)\ge \gamma(\beta) n^d$.
\end{thm}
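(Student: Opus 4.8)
The plan is to prove the contrapositive by strong induction on $d$: assuming $H_S(\R^d) < \gamma n^d$ for a sufficiently small $\gamma = \gamma(\beta,d)$, I will produce flats $F_1,\dots,F_k$ with $\sum_i \dim F_i \le d-1$ whose union covers at least $\beta n$ points. The base case $d=2$ is exactly Corollary \ref{Beck2}, where the degenerate cover is a single line. Throughout, I would run every sub-application of the inductive hypothesis with covering fraction $1-\epsilon$ for a tiny $\epsilon$ depending on $1-\beta$ and $d$; since only $O(d)$ flats are ever produced, the accumulated loss stays below $(1-\beta)n$ and the final cover still captures $\ge \beta n$ points.

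First I would apply Beck's theorem (Theorem \ref{Beck}). If there are at least $\gamma_d n^d$ spanning hyperplanes we are done, so assume a hyperplane $P$ with $|S\cap P|\ge \beta_d n$. Applying the inductive hypothesis to $S\cap P$ inside $P\cong \R^{d-1}$ gives a dichotomy: either $P$ is saturated, or $S\cap P$ admits a degenerate cover $\{G_i\}\subset P$ of total dimension $\le d-2$ capturing a $(1-\epsilon)$-fraction of $S\cap P$. In the saturated case, Theorem \ref{E-B_d_dim} (with $c_1$ the saturation constant and $c_2=\beta_d$) yields $H_S(\R^d)\ge \gamma_{EB}\,x\,n^{d-1}$ where $x=|S\setminus P|$; since $H_S(\R^d)<\gamma n^d$ this forces $x<(\gamma/\gamma_{EB})n$, so for $\gamma\le (1-\beta)\gamma_{EB}$ the single hyperplane $P$ already covers $\ge\beta n$ points. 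Notably, this case alone accommodates $\beta$ arbitrarily close to $1$.

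The crux is the non-saturated case, where the points inside $P$ concentrate on a low-dimensional structure $G=\bigcup_i G_i$ of total dimension $g\le d-2$, yet the outside set $B=S\setminus P$ may still be a constant fraction of $S$ and a priori spread through $\R^d$; the skew-lines example shows $B$ genuinely contributes a second flat. The key estimate I would establish is a join-counting lower bound: since $G$ is rich, joining a $(g-1)$-flat spanned by $S\cap G$ to a $(d-g-1)$-flat spanned by $B$ produces a spanning hyperplane, giving (after controlling overcounting and the genericity of the join) $H_S(\R^d)\gtrsim n^{g} N_B$, where $N_B$ is the number of $(d-g-1)$-flats spanned by $B$. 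Few hyperplanes then forces $N_B<\gamma' x^{\,d-g}$, i.e. $B$ spans few flats of the complementary dimension, and a Beck-type statement for $(d-g-1)$-flats, proved by the same induction applied to $B$, concludes that $B$ concentrates on a flat $G'$ of dimension $\le (d-1)-g$. Taking the union $G\cup G'$ gives total dimension $\le d-1$, and peeling the small uncovered remainder (which, again by the few-hyperplanes hypothesis, cannot be generic) completes the cover of $\ge\beta n$ points.

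I expect the main obstacle to be precisely this dimension bookkeeping in the non-saturated case: making the join-counting estimate rigorous (bounding overcounting and ensuring the joined flats are genuinely $(d-1)$-dimensional for a positive proportion of choices) and, crucially, proving that the structure found inside $P$ and the structure found in $B$ fit within a single budget of $d-1$ rather than adding up to roughly $2(d-1)$. Controlling the non-generic degeneracies of $G$ itself, when its rich points fail to span the expected number of sub-flats, will require a secondary induction on the dimension of the essential affine hull.
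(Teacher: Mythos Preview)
Your inductive framework is natural, but the non-saturated branch has a genuine gap that you correctly flag as the main obstacle and that the paper's approach sidesteps. The cover $G=\bigcup G_i$ produced by the inductive hypothesis inside $P$ is only a \emph{cover} of total dimension $g\le d-2$; nothing forces the $G_i$ to be saturated, nor $S\cap G$ to span $\gtrsim n^{g}$ flats of dimension $g-1$. Concretely, in $\R^5$ the cover could be three rich lines lying in a common $2$-plane $\Pi$: total dimension $3$, yet $S\cap G\subset\Pi$ spans exactly one $2$-flat, so your join estimate $H_S\gtrsim n^{g}N_B$ collapses. The second gap is sharper: the statement ``few $(d-g-1)$-flats spanned by $B$ forces $B$ onto a flat of dimension $\le d-1-g$'' is \emph{not} an instance of the inductive hypothesis, which only controls hyperplanes of the affine hull of $B$. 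If $B$ spans all of $\R^d$, the hypothesis bounds $(d-1)$-flats, not $(d-g-1)$-flats, and the conclusion it yields is concentration on total dimension $\le d-1$, not $\le d-1-g$. Closing this requires exactly the ``essential dimension'' strengthening you are trying to prove, so the induction does not close as written.

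The paper avoids both issues by never applying the theorem inductively to $S\cap P$. Instead it iterates Beck's theorem \emph{downward inside} each rich flat until it hits a rich \emph{saturated} $a_1$-flat $F_1$, peels $F_1$ off (still $\ge(1-\beta)n$ points remain by hypothesis), and repeats, accumulating rich saturated flats $F_1,\dots,F_k$ until $\sum a_i\ge d$. The heart of the argument is then Lemma~\ref{lem_good_flats}: a ``good'' collection of rich saturated flats (meaning $\dim F_{[k]}=d$ but $\dim F_I\ge\sum_{i\in I}a_i$ for every proper $I$) yields $\gtrsim n^d$ spanning hyperplanes, proved by choosing spanning hyperplanes $P_i\subset F_i$ one at a time so that $H=\langle P_1,\dots,P_k\rangle$ satisfies $H\cap F_i=P_i$. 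If the collection is not good, the offending subfamily $\{F_i\}_{i\in I}$ is merged into the single flat $F_I$, which is rich and---by Lemma~\ref{lem_good_flats} applied in dimension $\dim F_I<d$---itself saturated; one then continues collecting. This merging step plays the role your missing ``Beck for $(d-g-1)$-flats'' was meant to play, and the guaranteed saturation of every $F_i$ is precisely what supplies the $n^{a_i}$ choices that your raw cover $G$ cannot.
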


\begin{remark} After posting this paper on arXiv, the author learned that a stronger result was proved by Ben Lund in \cite{Lund} five months earlier. Indeed, theorem 2 part (1) in \cite{Lund} implies theorem \ref{thm_d_dim}. However, Lund's method of proof is quite technical and complicated, using projection and induction by the dimension. Our proof is simpler and more intuitive which uses a completely different method. We believe using this result with some work, we can recover Lund's result. Moreover, for many applications such as point-hyperplane incidences and point-cover problem, theorem \ref{thm_d_dim} is enough. 
\end{remark}

%Therefore we do not expect to extend theorem \ref{Erdos-Beck} and corollary \ref{Beck2} to higher dimensions in a straightforward manner. In this note, we will prove this result
%In this paper, we will prove that any $\beta_3<1/2$ would work, and that points clustering to  two skew lines is the only new exceptional case in $\R^3$ (see theorem \ref{thm_3_dim}). Our second main result is the following:

 %Before stating our main results, we need some definitions, as inspired in \cite{Eleke}.

%Lund proved the following result
%\begin{thm} [Lund] The essential dimension of a set of points in Given the set $S$ of $n$ points in $\R^d$, let $g_i$ denote the maximum number of points in $S$ which belongs to union of some collection of flats whose dimensio
%\end{thm}

%This theorem is consistent with corollary \ref{Beck2} and theorem \ref{thm_3_dim}. When $d=2$, the only possible collection of flat is a line; when $d=3$, this collection of flats can be a plane or two (skew) lines.
In theorem \ref{thm_d_dim},  when $d=2$, we get back corollary \ref{Beck2} since the only possible collection of flat whose sum of dimension less than 2 is a line. Similarly, when $d=3$ the only possible collection of flat whose sum of dimension less than 3 is either a plane or two lines.
Roughly speaking, this theorem implies $n$ points in $\R^d$ spans $\Theta(n^d)$ hyperplanes (or the space $\R^d$ is saturated) unless most points cluster to a collection of flats whose sum of dimension is strictly less than $d$.  This description is satisfactory because if all but $o(n)$ points are outside union of such a collection, we do not expect to get $\Theta(n^d)$ hyperplanes, as shown in the following result:

\begin{prop}\label{prop}
$S$ is a set of $n$ points in $\R^d$. Assume  all but at most $x$ points belong to the union of flats $\{F_1,\dots, F_k\}$ where $\sum_{i=1}^k \dim F_i <d$, then $H_S(\mathbb{R}^d)\leq (x+d)n^{d-1}$.
\end{prop}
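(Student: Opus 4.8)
The plan is to classify each spanning hyperplane $P$ -- that is, each $(d-1)$-flat whose points of $S$ affinely span it, equivalently each $(d-1)$-flat containing $d$ affinely independent points of $S$ -- according to whether or not it fully contains one of the flats $F_i$, and then to parametrize the two resulting families economically. Throughout I would assume that each $F_i$ has positive dimension $d_i\ge 1$; this is the meaningful reading of the hypothesis (a zero-dimensional flat is a single point, and covering $S$ by singletons would make the bound false, as $n$ points in general position already span $\sim n^d$ hyperplanes), and it has the crucial consequence that $\sum_i d_i<d$ forces $k\le d-1$. Write $E=S\setminus\bigcup_i F_i$ for the exceptional points, so $|E|\le x$.

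For the first family, suppose $P$ contains some $F_i$. Since $S\cap P$ affinely spans $P$ and $F_i\subseteq P$ has dimension $d_i$, I would enlarge $F_i$ up to all of $P$ by adjoining points of $S\cap P$ one at a time, each strictly increasing the dimension of the affine hull; exactly $d-1-d_i$ points $q_1,\dots,q_{d-1-d_i}\in S$ are needed, so that $P$ is the affine hull of $F_i\cup\{q_1,\dots,q_{d-1-d_i}\}$. With $F_i$ fixed, the number of such $P$ is then at most $n^{d-1-d_i}\le n^{d-1}$, and summing over the at most $d-1$ flats bounds this family by $(d-1)n^{d-1}$.

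For the second family, suppose $P$ contains no $F_i$, so each intersection $F_i\cap P$ is a flat of dimension at most $d_i-1$. The key estimate is that the points of $S$ lying on the flats cannot by themselves span $P$: using the sub-additivity of affine hulls (the affine hull of a union of $k$ flats of dimensions $\delta_1,\dots,\delta_k$ has dimension at most $\sum_i\delta_i+(k-1)$), the points of $S$ on $\bigcup_i (F_i\cap P)$ span a flat of dimension at most $\sum_i(d_i-1)+(k-1)=\sum_i d_i-1\le d-2$. Since $P$ has dimension $d-1$, some point of $E$ must lie on $P$. Hence every hyperplane in this family passes through a point of $E$, and a spanning hyperplane through a fixed $e\in E$ is the affine hull of $e$ together with $d-1$ further points of $S$; there are at most $n^{d-1}$ choices of the latter. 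Summing over the at most $x$ points of $E$ bounds this family by $x\,n^{d-1}$. Adding the two estimates yields $H_S(\R^d)\le (d-1)n^{d-1}+x\,n^{d-1}=(x+d-1)n^{d-1}\le (x+d)n^{d-1}$.

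The step I expect to be most delicate is the dimension count in the second case: one must apply the sub-additivity of affine hulls correctly, allowing for flats that meet $P$ in the empty set (which only lowers the bound), and verify that the resulting dimension $d-2$ is \emph{strictly} below $d-1$. This strict drop is precisely what forces an exceptional point onto $P$ and is what converts a potential factor of $n$ into the bounded factor $x$. The only other point to watch is the positivity convention on the $F_i$, which keeps $k\le d-1$ and prevents the first family from contributing its own spurious factor of $n$.
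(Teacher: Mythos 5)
Your proof is correct and follows essentially the same route as the paper's: both split the spanning hyperplanes into those containing some $F_i$ (at most $n^{d-1-\dim F_i}$ each, summed over the at most $d-1$ flats) and those forced to pass through one of the at most $x$ exceptional points (at most $n^{d-1}$ each). The only differences are that the paper establishes the dichotomy by a pigeonhole count on a fixed $d$-point generating set of each hyperplane whereas you use sub-additivity of affine hulls, and that you make explicit the (genuinely necessary) convention that each $F_i$ has positive dimension so that $k\le d-1$, a point the paper leaves implicit.
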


We now discuss some applications of our results. An immediate consequence of theorem \ref{thm_d_dim} is a stronger version of  Beck's theorem:

\begin{cor}\label{cor_beta_d}
Given an integer $d\geq 2$, in $\R^d$, for any $\beta_d\in(0,\frac{1}{d-1})$ there is some $\gamma_d$ such that any $n$ points in $\R^d$ defines at least $\gamma n^d$ hyperplanes unless some hyperplane contains at least $\beta_dn$ points. In other words, any $0<\beta_d<\frac{1}{d-1}$ works in theorem \ref{Beck}.
\end{cor}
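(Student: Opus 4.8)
The plan is to derive the corollary directly from Theorem \ref{thm_d_dim} by tuning the free parameter $\beta$ and then converting the "collection of flats" alternative into a single rich hyperplane. Since we are given $\beta_d\in(0,\frac{1}{d-1})$, the number $(d-1)\beta_d$ is strictly less than $1$, so I would invoke Theorem \ref{thm_d_dim} with the specific choice $\beta:=(d-1)\beta_d\in(0,1)$. This produces a constant $\gamma(\beta)$ and the dichotomy: either a collection $\{F_1,\dots,F_k\}$ with $\sum_{i=1}^k\dim F_i<d$ covers at least $\beta n=(d-1)\beta_d\,n$ points of $S$, or $H_S(\R^d)\ge\gamma(\beta)n^d$.

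The second alternative is immediate: setting $\gamma_d:=\gamma(\beta)$ gives the desired $H_S(\R^d)\ge\gamma_d n^d$, and there is nothing more to prove. So the work lies entirely in the first alternative. Here I would argue that each flat in the collection may be taken positive-dimensional, so that $\dim F_i\ge 1$ for every $i$; combined with the integrality of dimensions this turns $\sum_{i=1}^k\dim F_i<d$ into $\sum_{i=1}^k\dim F_i\le d-1$ and hence $k\le d-1$. A pigeonhole step over the at most $d-1$ flats then yields an index $j$ with $|S\cap F_j|\ge \frac{(d-1)\beta_d\,n}{d-1}=\beta_d n$. Finally, since $\dim F_j\le d-1$, the flat $F_j$ sits inside some hyperplane $H$ (if $\dim F_j<d-1$, extend its affine hull by generic directions), and then $H$ contains at least $\beta_d n$ points of $S$. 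Thus in this case the "unless" clause holds, and we conclude: for every $S$, either some hyperplane carries at least $\beta_d n$ points, or $H_S(\R^d)\ge\gamma_d n^d$.

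The one genuine subtlety, which I expect to be the crux rather than any computation, is the reduction to positive-dimensional flats giving $k\le d-1$. This is forced on us for Theorem \ref{thm_d_dim} to carry content at all, since a union of $\beta n$ single points (zero-flats) would otherwise satisfy its first alternative trivially; once flats are positive-dimensional the bound $k\le d-1$ is automatic. It is worth emphasizing that the pigeonhole loses exactly the factor $d-1$, which is precisely why the threshold in the corollary is $\frac{1}{d-1}$: the constraint $\beta<1$ in Theorem \ref{thm_d_dim} translates under $\beta=(d-1)\beta_d$ into $\beta_d<\frac{1}{d-1}$. This matches the two-skew-lines obstruction noted earlier for $d=3$, where $\beta_3\le\frac12=\frac{1}{d-1}$, confirming that the threshold is the right one and cannot be pushed higher by this method.
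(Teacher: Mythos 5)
Your proposal is correct and follows essentially the same route as the paper: apply Theorem \ref{thm_d_dim} with $\beta=(d-1)\beta_d$, use $k\le d-1$ to pigeonhole a flat with at least $\beta_d n$ points, and pass to a hyperplane containing it. Your explicit remark that the flats must be positive-dimensional (so that $k\le d-1$ follows from $\sum\dim F_i\le d-1$) makes precise a step the paper leaves implicit, but the argument is the same.
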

Indeed, let $\beta=(d-1)\beta_d<1$ and choose $\gamma_d=\gamma(\beta)$ as in theorem \ref{thm_d_dim}. If the number of spanning hyperplanes is less than $\gamma_d n^d$, there is some collection of flats $\{F_1,\dots, F_k\}$ whose union contains at least $\beta n$ points. This implies some flat contains $\ge (\beta/k) n\geq \beta_d n$ points since $k\leq d-1$. Any hyperplane that contains this flat must contain at least $\beta_d n$ points. 

Theorem \ref{thm_d_dim} also has some application in incidence geometry.   
In \cite{Eleke}, Elekes and T\'oth gave a bound for the number of $k$-rich $\gamma$-saturated hyperplanes w.r.t. $n$ points in $\R^d$, which in turns implies a bound for the number of $k$-rich $\alpha$- degenerate hyperplanes where $\alpha$-degenerate flats are defined as followed: An $r$-flat $F$ in $\R^d$ is \textit{$\alpha$-degenerate} for some $0<\alpha\leq 1$ if $F\cap S\neq \emptyset$ and at most $\alpha |F\cap S|$ points of $F\cap S$ lie in an $(r-1)$-flat.

\begin{thm}\label{Eleke-2}[Elekes-T\'oth] Given a set $S$ of $n$ points in $\R^d$. There is some constant  $C(d,\gamma)>0$ such that for any $k$, the number of $k$-rich, $\gamma$-saturated hyperplanes w.r.t. $S$ is at most $C(d,\gamma) (n^dk^{-(d+1)}+n^{d-1}k^{-(d-1)})$.

This, combined with Beck's theorem, implies there are positive constants $\beta_{d-1}$ and $C(d)$ such that for any set $n$ points in $\R^d$, the number of $k$-rich $\beta_{d-1}$-degenerate hyperplanes is at most $C(d)(n^dk^{-(d+1)}+n^{d-1}k^{-(d-1)})$.

When $d=3$ using Erd\H{o}s-Beck's theorem, one obtains a stronger result: for any $\beta\in(0,1)$ there is some constant $C$ such that the number of $k$-rich $\beta$-degenerate planes is at most $C(n^3k^{-4}+n^2k^{-2})$.
\end{thm}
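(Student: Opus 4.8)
The plan is to treat the two assertions separately. The first bound, on $k$-rich $\gamma$-saturated hyperplanes, is precisely the theorem of Elekes and T\'oth, so I would simply invoke it from \cite{Eleke} and spend no effort reproving it. The genuine content is the second assertion, and the engine behind it is a single implication: for a suitable choice of $\beta_{d-1}$, every $\beta_{d-1}$-degenerate hyperplane is automatically $\gamma$-saturated for a fixed $\gamma=\gamma(d)$. Granting this, every $k$-rich $\beta_{d-1}$-degenerate hyperplane is in particular a $k$-rich $\gamma$-saturated hyperplane, so the number of the former is at most the number of the latter, and the first part bounds this by $C(d,\gamma)(n^dk^{-(d+1)}+n^{d-1}k^{-(d-1)})$, giving the claim with $C(d):=C(d,\gamma)$.

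To establish the implication, the idea is to apply Beck's theorem (Theorem \ref{Beck}) not in $\R^d$ but inside the hyperplane $F$ itself, regarded as a copy of $\R^{d-1}$. Let $\beta_{d-1}$ and $\gamma_{d-1}$ be the constants furnished by Beck's theorem in dimension $d-1$, and write $m=|F\cap S|$. By definition, a $\beta_{d-1}$-degenerate hyperplane $F$ is one in which at most $\beta_{d-1}m$ points of $F\cap S$ lie in any $(d-2)$-flat; in the intrinsic geometry of $F\cong\R^{d-1}$ this says exactly that no hyperplane of $F$ contains $\beta_{d-1}m$ of the points of $S\cap F$. Beck's theorem in dimension $d-1$ then forces its second alternative: the number of $(d-2)$-flats of $F$ spanned by $S\cap F$ is at least $\gamma_{d-1}m^{d-1}$, i.e.\ $H_S(F)\ge \gamma_{d-1}|F\cap S|^{d-1}$. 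This is precisely the statement that $F$ is $\gamma_{d-1}$-saturated. Thus I would fix $\beta_{d-1}$ to be Beck's constant in dimension $d-1$ and take $\gamma=\gamma_{d-1}$.

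The individual steps are short, so the main obstacle is conceptual rather than computational: one must match the degeneracy parameter to the Beck constant of the \emph{ambient} dimension $d-1$ (not $d$), and be careful throughout that Beck's theorem is being applied to the configuration $S\cap F$ within the geometry of $F$, where $(d-2)$-flats play the role of hyperplanes. The one point deserving verification is that the exponent $d-1$ appearing in the definition of $\gamma$-saturated genuinely matches the count of spanning hyperplanes of $F$ produced by Beck's theorem in dimension $d-1$; this holds because hyperplanes of $\R^{d-1}$ are spanned by $d-1$ points in general position, so $m^{d-1}$ is the correct order of magnitude in both places. With the constants aligned in this way, the reduction and hence the theorem follow.
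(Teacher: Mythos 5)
Your proposal matches the paper's treatment: the first bound is simply quoted from Elekes--T\'oth, and the second assertion is exactly the one-line reduction the paper indicates (``this, combined with Beck's theorem, implies\dots''), namely that a $\beta_{d-1}$-degenerate hyperplane $F$, viewed intrinsically as $\R^{d-1}$, must fall into Beck's second alternative and hence be $\gamma_{d-1}$-saturated, after which part one applies. The argument is correct (up to the harmless boundary convention of taking the degeneracy parameter strictly below Beck's constant so that ``at most $\beta_{d-1}|F\cap S|$'' rules out the first alternative), and the constants are aligned exactly as the paper intends.
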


By corollary \ref{cor_beta_d}, the second part of this theorem holds for any $\beta_{d-1}<1/(d-1)$. Moreover, if we redefine $\alpha$-degenerate as:
\begin{defi}
For integers $0<r\leq d$, given a point set $S$ and an $r$-flat $F$ in $\R^d$, we say $F$ is \textbf{$\alpha$-degenerate} for some $0<\alpha\leq 1$ if $F\cap S\neq \emptyset$ and at most $\alpha |F\cap S|$ points of $F\cap S$ lie in union of some flats whose sum of dimension is strictly less than $r$.
\end{defi}
then using theorem \ref{thm_d_dim}, we have a stronger version of theorem \ref{Eleke-2} in any dimension:

\begin{cor}\label{new-eleke} For any $\beta\in (0,1)$, there is some positive constant
$C(d,\beta)$ such that for any set $n$ points in $\R^d$, the number of $k$-rich $\beta$-degenerate hyperplanes is at most $C(d,\beta)(n^dk^{-(d+1)}+n^{d-1}k^{-(d-1)})$.
\end{cor}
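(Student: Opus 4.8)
The plan is to reduce a $\beta$-degenerate hyperplane to a saturated one and then invoke the Elekes–T\'oth bound from Theorem \ref{Eleke-2}. The key observation is that the new notion of $\beta$-degeneracy for a hyperplane $H$ (the case $r=d-1$ of the redefined $\alpha$-degeneracy) is precisely the negation of the first alternative in Theorem \ref{thm_d_dim}, but carried out one dimension lower, inside $H$ viewed as a copy of $\R^{d-1}$.

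First I would fix a $k$-rich $\beta$-degenerate hyperplane $H$ and set $m=|H\cap S|\ge k$. By definition, at most $\beta m$ points of $H\cap S$ lie in the union of any collection of flats in $H$ whose dimensions sum to less than $d-1$. Choosing $\beta'=(1+\beta)/2\in(\beta,1)$, this means no such collection captures as many as $\beta' m$ points, so the first alternative of Theorem \ref{thm_d_dim} — applied to the point set $H\cap S$ inside $H\cong\R^{d-1}$ with parameter $\beta'$ — fails. Hence the second alternative must hold, giving $H_S(H)\ge \gamma(\beta')\,m^{d-1}$, where $\gamma(\beta')$ is the constant furnished by Theorem \ref{thm_d_dim} in dimension $d-1$. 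Since the spanning hyperplanes of $H\cong\R^{d-1}$ are exactly the $(d-2)$-flats counted by $H_S(H)$, and the saturation exponent in Definition \ref{defi_saturated} is $d-1$, this is precisely the statement that $H$ is $\gamma(\beta')$-saturated.

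Consequently, every $k$-rich $\beta$-degenerate hyperplane is a $k$-rich $\gamma(\beta')$-saturated hyperplane, so the quantity we wish to bound is at most the number of $k$-rich $\gamma(\beta')$-saturated hyperplanes. Applying Theorem \ref{Eleke-2} with $\gamma=\gamma(\beta')$ yields the bound $C(d,\gamma(\beta'))\bigl(n^dk^{-(d+1)}+n^{d-1}k^{-(d-1)}\bigr)$, and setting $C(d,\beta)=C\bigl(d,\gamma((1+\beta)/2)\bigr)$ completes the argument.

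I expect the main obstacle to be bookkeeping rather than any genuine difficulty. One must check that the ``collection of flats'' appearing in the degeneracy definition matches exactly the collections in Theorem \ref{thm_d_dim} (each of dimension at least one, with dimension sum strictly below $d-1$, hence at most $d-2$ of them), and that the richness hypotheses needed to invoke Theorem \ref{thm_d_dim} inside $H$ are met in the relevant range of $k$; for the few small values of $m$ where the theorem does not directly apply, the asserted inequality can be absorbed into the constant $C(d,\beta)$. A minor point to verify separately is the base dimension $d-1=1$, where the first alternative is vacuous and the required saturation holds trivially.
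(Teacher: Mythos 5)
Your proposal is correct and takes essentially the same route the paper intends: the paper obtains this corollary exactly by substituting Theorem \ref{thm_d_dim} for Beck's theorem in the Elekes--T\'oth argument, i.e., showing every $k$-rich $\beta$-degenerate hyperplane (in the new sense) is saturated and then invoking the first part of Theorem \ref{Eleke-2}. Your $\beta'=(1+\beta)/2$ device for the boundary case and the checks on small $m$ and the base dimension are sound refinements of that same argument.
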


%One can ask why only care about number of spanning hyperplane instead of spanning flats of lower dimension? The reason is because for spanning flats of dimension $r<d-1$, we can simply project the whole configuration to a generic $(r+1)$-dim flat to reduce the problem to counting hyperplanes in $\R^{r+1}$.

This result in turns has some potential application to the point covering problem. Point covering problem is a famous problem in computation geometry which asks for efficient ways to cover $n$ points in space using lines, curves or hyperplanes, hypersurfaces. In \cite{point covering}, the authors use the point-hyperplane bound in theorem \ref{Eleke-2} to derive a good hyperplane covering algorithm. However, it only works in $\R^3$ because of the strong result when $d=3$. Corollary \ref{new-eleke} is one step closer to extend this result to any dimension.  

%The rest of the paper is devoted to prove our two main results, theorem \ref{E-B_d_dim} and theorem \ref{thm_d_dim}. Proof of theorem \ref{E-B_d_dim} is quite simple but use a clever observation ,  Method of proof: proof of theorem \ref{thm_d_dim}

The structure of the paper is as followed: section 2 is preliminary; section 3 we prove theorem \ref{E-B_d_dim}; in section 4, we prove theorem \ref{thm_d_dim}  in $\R^3$. The  proof of the general case  will be presented in section 5. We conclude with several open questions in section 6. 

%%%%%%%%%%%%%%%%%%%%%%%%%%%%%%%%%%%%%%%%%%%

\section{Preliminary}

Observe that if we embed $\R^d$ into $\R\mathbb{P}^d$, theorem \ref{E-B_d_dim} and \ref{thm_d_dim} still hold. From now on, we will assume we are working over $\R\mathbb{P}^d$ even if the statement says $\R^d$. The advantage of working over projective spaces is that we do not need to worry about parallel situation. For example, in $\R\mathbb{P}^3$, given any line $l$ and $P$, either $l\subset P$ or $l\cap P$ at exactly a point. This does not hold in $\R^3$ as $l$ can be parallel to $P$. In this case we can say $l\cap P$ at the infinity point. In general,

\begin{lem}\label{projective_space_union_flats}
For any flats $A,B$ in $\R\mathbb{P}^d$, let $\langle A,B\rangle$ denote the span of $A$ and $B$, the smallest flat that contains both $A$ and $B$. Then
$$\dim \langle A,B\rangle =\begin{cases}
\dim A+\dim B+1 & \mbox{if} A\cap B=\emptyset\\  \dim A+\dim B-\dim A\cap B &\mbox{otherwise}
\end{cases}
$$
\end{lem}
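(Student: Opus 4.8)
The plan is to lift the projective picture to the underlying vector space and reduce everything to the Grassmann dimension formula from linear algebra. Recall that $\R\mathbb{P}^d$ is the projectivization of $\R^{d+1}$, so that each $k$-dimensional projective flat $F$ corresponds to a unique linear subspace $\widehat{F}\subseteq\R^{d+1}$ of dimension $k+1$, and this correspondence is inclusion-preserving. First I would record the two translation rules: the span $\langle A,B\rangle$ corresponds to the linear sum $\widehat A+\widehat B$, and the projective intersection $A\cap B$ corresponds to the linear intersection $\widehat A\cap\widehat B$. The first rule holds because $\widehat A+\widehat B$ is the smallest linear subspace containing both $\widehat A$ and $\widehat B$, and projectivization carries smallest-containing-subspace to smallest-containing-flat; the second is immediate, since a point lies in $A\cap B$ exactly when the corresponding line through the origin lies in $\widehat A\cap\widehat B$.

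With these rules in hand, the core of the argument is a single application of Grassmann's identity
$$\dim(\widehat A+\widehat B)=\dim\widehat A+\dim\widehat B-\dim(\widehat A\cap\widehat B).$$
Using $\dim\widehat A=\dim A+1$, $\dim\widehat B=\dim B+1$, and $\dim\langle A,B\rangle=\dim(\widehat A+\widehat B)-1$, I would substitute and simplify, so that the only remaining task is to interpret $\dim(\widehat A\cap\widehat B)$ correctly in the two cases.

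The one point that needs care — and the only genuine subtlety here — is the bookkeeping for the empty intersection. When $A\cap B=\emptyset$ there is no common projective point, which forces $\widehat A\cap\widehat B=\{0\}$, a subspace of linear dimension $0$; substituting gives $\dim\langle A,B\rangle=(\dim A+1)+(\dim B+1)-0-1=\dim A+\dim B+1$, the first branch. When $A\cap B\neq\emptyset$ the intersection is an honest flat with $\dim(\widehat A\cap\widehat B)=\dim(A\cap B)+1$, and the same substitution yields $\dim A+\dim B-\dim(A\cap B)$, the second branch. I do not expect any real obstacle: the content is entirely Grassmann's identity, and all the work lies in keeping the $+1$ shifts between linear and projective dimension straight, together with the convention that the empty flat lifts to the zero subspace.
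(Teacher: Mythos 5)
Your proof is correct and complete: the paper states this lemma without proof, treating it as a standard fact about projective spaces, and your argument via the lift to $\R^{d+1}$ and Grassmann's identity is exactly the canonical justification. The dimension bookkeeping (the $+1$ shifts and the convention that an empty projective intersection lifts to the zero subspace) is handled correctly, so there is nothing to add.
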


\section{Proof of theorem \ref{E-B_d_dim}}
In this section we will prove theorem \ref{E-B_d_dim}. The key idea is to pair each point outside $P$ to a spanning hyperplane of $P$ to form a spanning hyperplane of $\R^d$, then use lemma \ref{lem_fix_point} and Cauchy-Schwartz inequality to take care of the over-counting.

Indeed, since $P$ is $c_1$-saturated and $c_2n$-rich w.r.t. $S$, the number of spanning hyperplanes in $P$ (which are $(d-2)$-dim flats of $\R^d$) is big: $H_S(P)\ge c_1|S\cap P|^{d-1}\ge c_1c_2^{d-1}n^{d-1}$. Let $X=S\setminus P$, the set of points of $S$ outside $P$, then $|X|=x$. Pairing each spanning hyperplanes of $P$ with a point in $X$ we get a spanning hyperplane of our space. If all those hyperplanes are distinct, we expect to see $\sim xn^{d-1}$ of them, exactly what we are trying to prove. However those hyperplanes are usually not distinct. It would be bad if all points of $X$ belong to a line $l$ and all spanning hyperplanes of $P$ pass through $l\cap P$. Fortunately this is not the case, as the following lemma shows:

\begin{lem}\label{lem_fix_point}
For a fixed point $q\in P$ (where $P$ is a $(d-1)$ dim flat), there are at most $n^{d-2}$ spanning hyperplanes of $P$ passing through $q$. 

In particular, for any given a set $S$ of $n$ points in the plane, the number of $S$-spanned lines passing through a fixed point $q$ (not necessarily in $S$) is at most $n$.
\end{lem}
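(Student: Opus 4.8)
The plan is to reduce the dimension by one via a central projection from the fixed point $q$, converting the spanning hyperplanes of $P$ through $q$ into spanning hyperplanes of a $(d-2)$-dimensional flat, which are then straightforward to count.

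First I would fix a $(d-2)$-flat $P'\subset P$ with $q\notin P'$ and consider the central projection $\pi_q\colon P\setminus\{q\}\to P'$ sending a point $p$ to the intersection of the line $\langle q,p\rangle$ with $P'$. Applying $\pi_q$ to $S\cap P$ (discarding $q$ itself if $q\in S$) yields a set of at most $n$ points in $P'$. Next I would establish a dimension-preserving correspondence between hyperplanes. Every $(d-2)$-flat $h$ containing $q$ meets $P'$ in a $(d-3)$-flat $h\cap P'$, and by Lemma \ref{projective_space_union_flats} the identity $\dim\langle q,h\cap P'\rangle=\dim h$ shows that $h=\langle q,h\cap P'\rangle$, so $h$ is recovered from its trace $h\cap P'$. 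Hence $h\mapsto h\cap P'$ is injective on the hyperplanes of $P$ through $q$.

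I would then check that this map sends spanning hyperplanes to spanning hyperplanes: if $h$ contains $d-1$ affinely independent points of $S\cap P$, then since $q\in h$ the projections of these points affinely span the $(d-3)$-flat $h\cap P'$, so $h\cap P'$ is genuinely spanned by $\pi_q(S\cap P)$. It then remains to bound the spanning hyperplanes of $P'$, a $(d-2)$-dimensional flat carrying at most $n$ points. Each such hyperplane is the affine span of some $d-2$ affinely independent points among them, so assigning to each spanning hyperplane a fixed such $(d-2)$-subset gives an injection into the $(d-2)$-subsets of an $n$-point set; their number is thus at most $\binom{n}{d-2}\le n^{d-2}$. Composing with the injection of the previous step bounds the spanning hyperplanes of $P$ through $q$ by $n^{d-2}$. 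For the final "in particular", I would run the same projection in the plane (the case $P=\R^2$, $d=3$): each $S$-spanned line through $q$ contains a point of $S\setminus\{q\}$ whose image under $\pi_q$ is a distinct point of a fixed auxiliary line, giving the bound $n$.

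The main obstacle I anticipate is the bookkeeping in the correspondence step: confirming, through the dimension formula of Lemma \ref{projective_space_union_flats}, that the projection is injective on flats through $q$ and genuinely preserves the spanning property, including the degenerate case $q\in S$ in which one of the affinely independent points coincides with the center of projection and must be treated separately.
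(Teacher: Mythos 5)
Your proof is correct, and underneath it produces the same combinatorial bound as the paper, but it is organized along a genuinely different route. The paper's argument is a direct two-line count: for each spanning hyperplane $H$ of $P$ through $q$, since $S\cap H$ spans $H$ one can choose $d-2$ points of $S\cap H$ which together with $q$ are in general position and span $H$; the chosen set $T$ determines $H=\langle q,T\rangle$, so distinct hyperplanes receive distinct $(d-2)$-element subsets of $S$, giving at most $\binom{n}{d-2}<n^{d-2}$. You instead insert a central projection $\pi_q$ onto an auxiliary $(d-2)$-flat $P'$ and convert the problem into counting spanning hyperplanes of $P'$ with respect to the at most $n$ projected points, which you then bound by the same generating-subset count. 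After unwinding, the two injections essentially coincide: a $(d-2)$-subset $T\subset S\cap h$ whose projections span the trace $h\cap P'$ is precisely a $T$ with $\langle q,T\rangle=h$, because $\langle q,T\rangle=\langle q,\pi_q(T)\rangle$. What your formulation buys is an explicit dimension reduction --- spanned hyperplanes through a point of a $(d-1)$-flat inject into spanned hyperplanes of a $(d-2)$-flat --- which is the projection-and-induct philosophy the paper attributes to Lund's proof of the stronger form of Theorem \ref{thm_d_dim}, and which would let you iterate or quote lower-dimensional results verbatim. What it costs is exactly the bookkeeping you flag: well-definedness of $\pi_q$ (automatic since the paper works in $\R\mathbb{P}^d$, so no parallelism issues), injectivity of $h\mapsto h\cap P'$ via Lemma \ref{projective_space_union_flats}, preservation of spanning under projection (if the projections of $S\cap h\setminus\{q\}$ spanned only a proper subflat $W\subsetneq h\cap P'$, then $h\subseteq\langle q,W\rangle$ would have dimension less than $d-2$, a contradiction), and the degenerate case $q\in S$. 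All of these check out, so your proposal is sound; it is simply heavier machinery than the paper's direct count.
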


\begin{proof} In any spanning hyperplane $H$ of $P$ that passes through $q$, we can find $d-2$ points of $S\cap H$ so that they together with $q$ form $d-1$ points in general position that spans $H$. Two hyperplanes are distinct only if we can find distinct sets of $d-2$ points. Hence the number of hyperplanes is at most ${n\choose d-2}<n^{d-2}$.
\end{proof} 

Now consider the set of all hyperplanes spanned by a point in $X$ and a spanning hyperplane of $P$: $\mathcal{P}= \{P_1,\dots, P_L\}$ and assume $|P_i\cap X|=a_i$. Then 
$$\sum_{i=1}^L a_i=\#\{(q,H): q\in X, H\in H_S(P)\}\geq c_1c_2^{d-1} xn^{d-1}$$
 Here we abuse the notation $H_S(P)$ to denote the set of all $S$-spanned hyperplanes of $P$. On the other hand, consider
$$J=\#\{(q_1,q_2,P_i): P_i\in \mathcal{P};  q_1,q_2\in X\cap P_i; q_1\neq q_2\}$$
For each choice of $(q_1,q_2)$, the line through them intersect $P$ at some point $q$. For hyperplane $P_i$ in $\mathcal{P}$ that contain $q_1,q_2$, $P_i\cap P$ at some hyperplane of $P$ that contains $q$. By lemma \ref{lem_fix_point}, number of choices for such hyperplanes is at most $n^{d-2}$. Hence $J\leq x^2n^{d-2}$.

On the other hand, for each fixed $P_i$, there are ${a_i\choose 2}$ choices for $(q_1,q_2$). Using Cauchy-Schwartz inequality:
$$J=\sum_{i=1}^L{a_i\choose 2}\geq \frac{1}{3}\sum_{i=1}^L a_i^2-L\geq \frac{1}{3L}\left(\sum_{i=1}^L a_i\right)^2-L\ge 2c\frac{(xn^{d-1})^2}{L}-L$$

where $2c=1/3(c_1c_2)^2$. Rewrite the inequality as $JL+L^2\geq cx^2n^{2d-2}$, we must have either $L^2\geq c x^2n^{2d-2}$ or $LJ\geq c x^2n^{2d-2}$. As $J\leq x^2n^{d-2}$ and $x\leq n$,  in both cases, we would have $L\geq \gamma xn^{d-1}$ for some constant $\gamma$ depending on $c_1$ and $c_2$. Finally it is clear $H_S(\R^d)\geq L\ge \gamma xn^{d-1}$. This completes our proof of theorem \ref{E-B_d_dim}. \qed

\section{Three dimensional case}
In this section we will prove theorem \ref{thm_3_dim}, a special case of our main theorem \ref{thm_d_dim} when $d=3$. We want to prove it separately because its proof is similar, yet much simpler than the general case. We hope that by understanding the proof in this simple case, readers can convince themselves that our strategy works for the general case as well. It is of course totally fine to skip this section and go straight to the next one where the general case's proof is presented.
\begin{thm}\label{thm_3_dim}
For any $\beta\in (0,1)$, there is some constant $\gamma$ depending on $\beta$ such that: for any $n$-points set $S$ in $\mathbb{R}^3$, either there exists a plane contains at least $\beta n$ points of $S$, or there are two skew lines whose union contains at least $\beta n$ points of $S$, or the number of spanning hyperplanes exceeds $\gamma n^3$. 
As a consequence, any $\beta_3\in (0,1/2)$ would work in theorem \ref{Beck}.
\end{thm}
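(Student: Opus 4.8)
\emph{Plan.} I would argue by contraposition in the convenient form: assuming that no plane contains $\ge \beta n$ points of $S$ and that no two skew lines cover $\ge \beta n$ points of $S$, I would show $H_S(\R^3)\ge \gamma n^3$. The whole strategy rests on two pillars. First, theorem \ref{E-B_d_dim} turns a single good plane into many spanning planes: if some plane $P$ is $c_1$-saturated and $c_2n$-rich, then $H_S(\R^3)\ge \gamma' |S\setminus P|\,n^2$, and since our standing assumption forbids any plane from holding $\ge\beta n$ points we have $|S\setminus P|>(1-\beta)n$, so this already gives $H_S(\R^3)\ge \gamma'(1-\beta)n^3$. Thus it suffices to \emph{produce one saturated rich plane}. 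Second, the planar Beck result (corollary \ref{Beck2}), applied inside a plane, says that a rich plane is automatically saturated \emph{unless} a $(1-\epsilon)$-fraction of its points lie on a single line. This yields the clean dichotomy I will use repeatedly: a rich plane is either saturated (and then we are done via theorem \ref{E-B_d_dim}) or \emph{degenerate}, concentrating its points onto one rich line.

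The construction then proceeds by mining rich lines. First I apply Beck's theorem \ref{Beck} in dimension $3$: either $H_S(\R^3)\ge \gamma_3 n^3$ (done), or there is a plane $P_1$ with $\ge \beta_3 n$ points. By the dichotomy, $P_1$ is either saturated (done) or degenerate, giving a rich line $\ell_1$ with $\gtrsim n$ points. Writing $T=S\setminus \ell_1$, the no-$\beta n$-plane hypothesis forces $|S\cap \ell_1|<\beta n$, hence $|T|>(1-\beta)n$, and I recurse on $T$: running Beck's theorem and the same dichotomy on the point set $T$ produces either $\ge \gamma_3|T|^3\gtrsim n^3$ spanning planes of $S$ (these planes are spanned by points of $T\subseteq S$, done), or a saturated rich plane (done), or a second rich line $\ell_2$ whose $S$-points all lie off $\ell_1$.

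The endgame is to combine $\ell_1$ and $\ell_2$, and this is where I expect the real work to sit. If $\ell_1$ and $\ell_2$ are coplanar, their common plane $R$ contains two rich lines, so the products of their point sets span $\gtrsim n^2$ distinct lines, making $R$ saturated and rich; since $|S\cap R|<\beta n$, theorem \ref{E-B_d_dim} finishes. If instead $\ell_1,\ell_2$ are skew, then either $\ell_1\cup\ell_2$ already covers $\ge\beta n$ points (the two-skew-line alternative, done), or a constant fraction $U=S\setminus(\ell_1\cup\ell_2)$, $|U|>(1-\beta)n$, lies off both. In the latter case I would recurse once more on $U$: any saturated rich plane finishes, and otherwise I extract a third rich line $\ell_3$ off $\ell_1\cup\ell_2$. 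If $\ell_3$ is coplanar with $\ell_1$ or $\ell_2$ the previous coplanar argument applies; if $\ell_3$ is skew to both, I invoke the key combinatorial fact that \emph{three mutually skew rich lines already force $\Omega(n^3)$ spanning planes}: a plane containing none of them as a whole meets each in at most one point, hence contains at most three points of $S$, so the $\gtrsim n^3$ triples (one point from each line) that are non-collinear yield $\Omega(n^3)$ planes of multiplicity $O(1)$. The main obstacle, which mirrors the crux of theorem \ref{thm_d_dim}, is making this termination quantitative: I must show the recursion cannot stall forever. This follows because distinct rich lines share at most one point, so there are only $O(1)$ of them; the process therefore either finishes in one of the branches above or collapses onto exactly two skew lines, whose union must then carry $\ge\beta n$ points once the unaccounted set drops below $(1-\beta)n$. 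Controlling the over-counting in the intermediate point-times-spanning-line estimates, exactly as in the Cauchy--Schwarz argument of theorem \ref{E-B_d_dim} together with lemma \ref{lem_fix_point}, is the delicate part.

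Finally, the consequence that any $\beta_3\in(0,1/2)$ works in theorem \ref{Beck} is immediate: apply theorem \ref{thm_3_dim} with $\beta=2\beta_3<1$ and set $\gamma_3=\gamma(\beta)$. If $H_S(\R^3)<\gamma_3 n^3$, then either a plane contains $\ge 2\beta_3 n\ge \beta_3 n$ points, or two skew lines cover $\ge 2\beta_3 n$ points, whence the larger of the two contains $\ge \beta_3 n$ points and any plane through it is a hyperplane with $\ge\beta_3 n$ points. In all cases some hyperplane holds $\ge\beta_3 n$ points, which is the assertion of theorem \ref{Beck} for this $\beta_3$.
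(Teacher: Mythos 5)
Your proposal is correct and follows essentially the same route as the paper: apply Beck's theorem to find a rich plane, use the saturated-or-degenerate dichotomy to either invoke theorem \ref{E-B_d_dim} or extract rich lines, dispose of coplanar pairs of rich lines by noting their common plane is saturated and rich, and finish with three pairwise skew rich lines spanning $\Omega(n^3)$ planes, which is the paper's lemma \ref{3skewlines}. Your multiplicity-counting proof of the three-skew-lines step is a harmless variant of the paper's explicit point-selection argument, and your derivation of the $\beta_3<1/2$ consequence matches what the paper leaves implicit.
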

%\begin{remark} A stronger result is proved in \cite{Lund-Purdy} using some point-plane incidence bound: $H_S(\R^3)\gtrsim x^2 n$ where $n-x$ is the maximum number of points of $S$ that belong to a plane or two lines.\end{remark}
 Assume no plane or two skew lines contains more than $\beta n$ points of $S$, we need to show $H_S(\mathbb{R}^3)\gtrsim_\beta n^3$. Here the notation $\gtrsim_\beta$ means we can put a constant that may depends on $\beta$ right after $\ge$ to make the inequality correct. We will sometimes write $\gtrsim$ when the dependence on $\beta$ is implicit.

By Beck's theorem \ref{Beck}, if no plane contains more than $\beta_3 n$ points, the space is saturated and we are done. So assume there is some plane $P_1$ that contains more than $\beta_3n$ points. If $P_1$ is $\gamma_2$-saturated, theorem \ref{E-B_d_dim} implies $H_S(\mathbb{R}^3)\gtrsim (n-|P_1\cap S|)n^2\gtrsim (1-\beta)n^3$ since no plane contains more than $\beta n$ points. If $P_1$ is not $\gamma_2$-saturated, by theorem \ref{Beck}, some line, say $l_1$, contains more than $\beta_2 |P\cap S|\geq \beta_2\beta_3n$ points of $S$. Excluding this line, there remains at least $(1-\beta)n$ points. We can repeat our argument for those points to find another line $l_2$ that contains at least $(1-\beta)\beta_2\beta_3n$ points. If $l_1,l_2$ belongs to a same plane, then that plane is saturated and contains a portion of points, so we can again apply theorem \ref{E-B_d_dim}. Otherwise $l_1$ and $l_2$ are skew. Because of our assumption, excluding those two lines we still have at least $(1-\beta)n$ points. Repeat the argument one more time, we can find another line $l_3$ that contains at least $(1-\beta)\beta_2\beta_3n$ points and is skew to $l_1$ and $l_2$. We finish our proof by the following lemma:
\begin{lem}\label{3skewlines}
Given 3 lines $l_1,l_2,l_3$ in $\mathbb{R}^3$, pairwise skew, and $|l_i\cap S|\geq c_in$ for $i=1,2,3$. Then $H_S(\mathbb{R}^3)\gtrsim_{c_1,c_2,c_3} n^3$.
\end{lem}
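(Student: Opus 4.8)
The plan is to count ordered triples $(p_1,p_2,p_3)$ with $p_i\in l_i\cap S$, observe that there are at least $c_1c_2c_3 n^3$ of them, and then show that the map sending such a triple to the plane it spans is injective on all but $O(n^2)$ of them. Since a non-collinear triple spans a hyperplane (a $2$-flat) of $\R^3$, this will give $H_S(\R^3)\ge c_1c_2c_3 n^3 - O(n^2)\gtrsim_{c_1,c_2,c_3} n^3$.

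First I would record the geometric facts that make the count work, all of which are clean in $\R\mathbb{P}^3$ via Lemma \ref{projective_space_union_flats}: since $l_1,l_2,l_3$ are pairwise skew, no plane can contain two of them (two skew lines are never coplanar), and a plane not containing a given $l_i$ meets it in at most one point. Call a triple \emph{good} if its three points are non-collinear and the plane $\pi$ they span contains none of the three lines. For a good triple, $\pi\cap l_i$ is a single point, which must be $p_i$; hence $\pi$ determines the triple, so distinct good triples span distinct planes. Therefore $H_S(\R^3)$ is bounded below by the number of good triples.

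Then I must bound the \emph{bad} triples, which come in two types. (a) Collinear triples: if $p_1,p_2,p_3$ lie on a common transversal $m$, then $m=\langle p_1,p_2\rangle$ is fixed by the first two points and $p_3=m\cap l_3$ is forced to at most one choice, so there are at most $|l_1\cap S|\,|l_2\cap S|\le n^2$ of them. (b) Triples whose spanning plane contains some $l_i$, say $l_1$: here $\pi=\langle l_1,p_2\rangle$ is determined by $p_2$, and since $l_3\not\subseteq\pi$ (else $\pi$ would contain the two skew lines $l_1,l_3$) the point $p_3=\pi\cap l_3$ is forced while $p_1$ ranges freely over $l_1\cap S$; this yields at most $n^2$ triples, and symmetrically for $l_2$ and $l_3$. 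Summing, the number of bad triples is $O(n^2)$.

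Combining the two bounds, the number of good triples is at least $c_1c_2c_3 n^3 - O(n^2)$, which exceeds $\tfrac12 c_1c_2c_3 n^3$ once $n$ is large relative to $1/(c_1c_2c_3)$, giving the claim. The only real obstacle is the overcounting analysis in the previous paragraph: one must check that every source of coincidence among the triples — three points collinear, or their plane swallowing an entire line — contributes only $O(n^2)$ and is thus of lower order. Each such coincidence is pinned down by fixing two of the three points and invoking a one-point intersection for the third, which is precisely where skewness, via the projective intersection–dimension formula of Lemma \ref{projective_space_union_flats}, is essential.
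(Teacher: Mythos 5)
Your proof is correct and is essentially the paper's argument: both rest on the observation that skewness forces a plane avoiding all three lines to meet each $l_i$ in the single point $p_i$, so such planes are in bijection with their generating triples. The only difference is bookkeeping — the paper greedily excludes the $O(1)$ forbidden points on $l_2$ and $l_3$ before counting, obtaining $c_1n(c_2n-1)(c_3n-2)$ triples, whereas you count all $c_1c_2c_3n^3$ triples and subtract the $O(n^2)$ bad ones afterwards.
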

\begin{proof} Heuristically if we pick a point on each line, they will form $\sim n^3$ planes, but those planes may not be distinct. To guarantee distinctness,  we need to pick our points more carefully: for any $p_1\in l_1$, choose $p_2\in l_2$ that does not belong to $\langle p_1,l_3\rangle$, then choose $p_3\in l_3$ which does not belong to $\langle p_1,l_2\rangle$ or $\langle l_1,p_2\rangle$. Now $p_1,p_2,p_3$ spans some plane $H$ such that $H\cap l_i=p_i$. Hence all planes $\langle p_1,p_2,p_3\rangle$ are distinct. So the number of spanning planes is at least $c_1n(c_2n-1)(c_3n-2)\gtrsim n^3$ (since we are not allowed to pick at most 1 point in $l_2$ and at most 2 points in $l_3$).
\end{proof}

\section{Higher dimensions}
The main purpose of this section is to prove theorem\ref{thm_d_dim}. But before we start, we will prove Proposition \ref{prop}, which illustrates that our result is tight.
\\
\\
\textit{Proof of Proposition \ref{prop}:}
For any spanning hyperplane $H$, we can pick $d$ points in $S\cap H$ in general position that generates $H$, call that set $D_H$. One hyperplane may have many generating sets, but two distinct hyperplanes must have distinct ones. Thus $H_S(\R^d)$ is at most the number of generating sets $\{D_H\}$. If $D_H$ contains a point outside $\cup F_i$, there are at most $x$ choices for that point, and ${n\choose d-1}<n^{d-1}$ choices for the remaining $d-1$ points. Therefore in this case the number of distinct $D_H$ is at most $xn^{d-1}$. 
Otherwise, assume $D_H$ contains only points in $\cup F_i$.
Let $a_i$ denote the dimension of $F_i$, and $b_i=|D_H\cap F_i|$. As $\sum b_i\geq d>\sum a_i$, there must exist some $i$ such that $b_i>a_i$, which implies $F_i\subset H$. For each $i\in [k]$, two hyperplanes $H_1$ and $H_2$ that contains $F_i$ are distinguished by the set $D_{H_1}\setminus F_i$ and $D_{H_2}\setminus F_i$. Since $|D_H\setminus F_i|=d-a_i-1$ for each such $H$, there are at most $n^{d-a_i-1}$ spanning hyperplanes that contains $F_i$. Summing together $\#\{D_H\}\le kn^{d-1}$. Therefore, the number of spanning hyperplanes do not exceed $(x+d)n^{d-1}$.
\qed

We now proceed to the proof of theorem \ref{thm_d_dim}. The overall strategy is similar to that of the 3 dimensional cases presented in the previous section: 
Assume any collection of flats whose sum of dimensions less than $d$ does not contain more than $\beta n$ points of $S$, we will show that $H_S(\R^d)\gtrsim n^d$. By Beck's theorem \ref{Beck}, if no hyperplane contains more than $\beta_d n$ points, the space is saturated and we are done. So assume there  is some $\beta_dn$-rich hyperplane $P_1$. If $H_S(P_1)\geq  \gamma_{d-1}|P_1\cap S|^{d-1}\gtrsim n^{d-1}$, we can apply theorem \ref{E-B_d_dim} as now we have a rich saturated hyperplane. Otherwise, by Beck's theorem, $P_1$ contains some $(\beta_d|S\cap P_1|)$-  rich hyperplane $P_2$ (which is of dimension $d-2$). Again by Beck's theorem, either $P_2$ is saturated or it contains some rich hyperplane. Repeating this argument, we end up with a $c_1n$-rich $\gamma_k$-saturated $k$-flat for some constant $c_1$ and $k\leq d-1$. 

By our assumption, this flat contains at most $\beta n$ points. Excluding this flat we are left with at least $(1-\beta)n$ points. Hence we can find another rich and saturated flat. Repeating this argument, we end up with a collection of rich saturated flats $\{F_1,\dots, F_k\}$ whose sum of dimensions is greater than or equal to $d$. 

%By Beck's theorem (theorem \ref{Beck}) there must be a hyperplane, say $H_1$, that contains at least $\beta_dn$ points of $S$. If $H_1$ is $\gamma_{d-1}-$ saturated we use the above result, if not using theorem \ref{Beck} again, there is some $(d-2)-$flat in $H_1$ that contains at least $\beta_{d-1}$ fraction of $S\cap H_1$, hence $\beta_{d-1}\beta_d$-full w.r.t. $S$. If this new flat is not $\gamma_{d-2}-$ saturated, we continue degenerate it to some lower dimension flat. We end up with some saturated and full flat $F_1$.  By our assumption, there remains at least $(1-\beta)n$ points outside $F_1$. Repeat our argument for this set of points, we find another full saturated flat $F_2$. If $\dim F_1+\dim F_2<d$, by our assumption there are at least $(1-\beta)n$ points outside $F_1\cup F_2$. 
 %Intuitively we should be done by now by picking $\dim F_i$ generic points from $F_i$ and hope their union spans a hyperplane. However, we need to be careful. As in lemma \ref{3skewlines}, not any 3 full saturated lines in $\mathbb{R}^3$ define $~n^3$ planes. We need at least two of them to be skew, or in other words, no plane contains all 3 lines.  
 We want to prove a result similar to lemma \ref{3skewlines}: a collection of rich saturated flats whose sum is at least $d$ defines $~n^d$ hyperplanes. However, notice that if $\langle F_1,F_2\rangle$, the span of $F_1$ and $F_2$, i.e. the smallest flat that contains both $F_1$ and $F_2$, has dimension less than $\dim F_1+\dim F_2$, by replacing $F_1,F_2$ by $\langle F_1,F_2\rangle$, we obtain another collection of flats whose sum of dimensions decreases. That observation inspires the following definition: 
 
 \begin{defi} Consider a collection of flats $\{F_1,\dots, F_k\}$ in $\mathbb{R}^d$. For any $I\subset [k]:=\{1,\cdots,k\}$, $F_I$ denote the span of $\{F_i\}_{i\in I}$. This collection is called good if $\dim F_{[k]}=d\leq \sum_{i} \dim F_i$ while $\dim F_I \geq \sum_{i\in I} \dim F_i$ for any $I\subsetneq [k]$.  
 \end{defi}

We are now ready to state the generalization of lemma \ref{3skewlines}.
\begin{lem}\label{lem_good_flats}
In $\mathbb{R}^d$, if there are a good collection of flats $\{F_1,\cdots, F_k\}$ each $F_i$ is of dimension $a_i$, $\gamma_{a_i}$-saturated and $c_in$-rich w.r.t. $S$, then $H_S(\mathbb{R}^d)\gtrsim n^d$.
\end{lem}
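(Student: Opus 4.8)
The plan is to generalize the selection argument of Lemma \ref{3skewlines}: I will build spanned hyperplanes of $\R^d$ greedily from the flats. Maintain a running spanned flat $G$, initially empty; processing $F_1,\dots,F_k$ in turn, at each step with $\dim G<d-1$ I pick inside $F_i$ a spanned flat $G_i$ of dimension $b_i:=\min\{a_i-1,\;d-2-\dim G\}$, in general position with $G$, and replace $G$ by $\langle G,G_i\rangle$; flats already contained in $G$ are skipped. Since the collection spans $\R^d$, $G$ necessarily reaches dimension $d-1$, at which point $H:=G$ is a spanned hyperplane. The increments $b_i+1$ used along the way sum to $d$, so the number of tuples produced is $\prod n^{b_i+1}=n^{d}$, the target. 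This recovers Lemma \ref{3skewlines} verbatim (there each used flat contributes a single point), and it automatically accommodates both $k>d$ (excess flats are skipped) and $\sum_i a_i>d$ (the dimensions $b_i$ are capped).

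Two inputs make the count legitimate. First, a saturated flat has many spanned flats of every lower dimension, not just hyperplanes: if $F$ is $\gamma$-saturated of dimension $a$ with $|F\cap S|\gtrsim n$, then for each $0\le b\le a-1$ it spans $\gtrsim n^{b+1}$ flats of dimension $b$. This telescopes from the trivial inequality $N_{b+1}\le N_b\cdot|F\cap S|$ (a spanned $(b+1)$-flat is the join of a spanned $b$-flat with one further point) down from the saturation bound $N_{a-1}\gtrsim n^{a}$, and gives $\gtrsim n^{b_i+1}$ candidates for each $G_i$. Second, saturation also forces all but a lower-order fraction of $F_i\cap S$ off any proper subflat, so the requirement that $G_i$ be in general position with the fixed flat $G$ — bounded by a Lemma \ref{lem_fix_point}-type estimate, using the good-collection inequalities $\dim F_I\ge\sum_{i\in I}a_i$ to keep the relevant intersections low-dimensional — discards only a negligible fraction of candidates. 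Thus $\gtrsim n^{b_i+1}$ admissible choices remain at every step.

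Distinctness is where the genuine difficulty lies. For every flat that contributes its full hyperplane ($b_i=a_i-1$) the general-position choice guarantees $F_i\not\subseteq H$ and $\dim(H\cap F_i)=a_i-1$, so that factor is recovered from $H$ as $G_i=H\cap F_i$. By construction the greedy process caps the dimension only at the very last contributing flat, so at most one flat $F_{i^\ast}$ is reduced, with $b_{i^\ast}<a_{i^\ast}-1$. When there is none (in particular whenever $\sum_i a_i=d$, the faithful analogue of Lemma \ref{3skewlines}) every factor is recoverable, the tuple-to-hyperplane map is injective, and we are done with $\gtrsim n^{d}$ distinct $H$. The obstacle is the single reduced flat $F_{i^\ast}$: there $H\cap F_{i^\ast}$ is strictly larger than the chosen $G_{i^\ast}$, so $G_{i^\ast}$ cannot be read off from $H$ and distinct choices may collapse onto one hyperplane.

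I expect this collapse to be the crux of the proof. One way to handle it is to project $F_{i^\ast}$ from the already-fixed flat $G$: in the quotient the choice of $G_{i^\ast}$ becomes the choice of a spanning hyperplane of the image of $F_{i^\ast}$, hence recoverable as the trace of $H$, and the image point set is still non-degenerate because the center $G$ was chosen in general position, so saturation survives the projection — here Beck's theorem \ref{Beck} (equivalently Erd\H{o}s--Beck) is applied locally in the quotient rather than globally. Alternatively one can keep the reduced flat and absorb the multiplicity by a Cauchy--Schwarz double count over pairs of tuples spanning a common hyperplane, exactly as in the proof of Theorem \ref{E-B_d_dim}. Either way, the technical heart is verifying that the good-collection inequalities keep every intersection and projection low-dimensional, so that the reduced flat contributes $\gtrsim n^{b_{i^\ast}+1}$ genuinely distinct extensions and the final count of spanned hyperplanes stays $\gtrsim n^{d}$.
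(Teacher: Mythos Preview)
Your plan coincides with the paper's proof: it too picks, one flat at a time, a spanned hyperplane $P_i\subset F_i$ (your $G_i$ with $b_i=a_i-1$) in general position, shows the resulting sequence is \emph{nice} in the sense that $H\cap F_i=P_i$ (which gives injectivity), and when $\sum_i a_i>d$ handles the single overloaded flat $F_k$ by projecting from $Q:=P_{[k-1]}\cap F_k$ down to an $(a_k-x)$-flat and invoking saturation of the image --- exactly your projection option for $F_{i^\ast}$.

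Two places where your sketch is thinner than the paper's execution. First, ``general position with the running flat $G$'' is not quite the right condition: the paper's choice of $P_s$ must also respect the \emph{not-yet-processed} flats, requiring $\langle P_s,P_I,F_J\rangle=\langle F_s,P_I,F_J\rangle$ for every $I\subset[s-1]$ and $J\subset[k]\setminus I$ with $\langle P_I,F_J\rangle\cap F_s\neq\emptyset$; this forward-looking constraint is what ultimately forces $H\cap F_i=P_i$ for \emph{all} $i$, not merely the ones already absorbed into $G$. Second, the paper commits to the projection route for the reduced flat and does not offer a Cauchy--Schwarz alternative; your proposed double count lacks an evident analogue of the ``at most $n^{d-2}$ hyperplanes through a fixed point'' bound needed to control the pair sum, so I would not rely on it.
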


We will show that this lemma finishes our proof of theorem \ref{thm_d_dim}.

\textit{Proof of theorem \ref{thm_d_dim}:} Recall from the beginning of this section: after applying Beck's theorem many times, we have a collection of rich saturated flats whose sum of dimensions is at least $d$. If this collection is not a good one, which means there is some $I\subset [k]$ so that $\dim F_I<\sum_{i\in I} \dim F_i$. By lemma \ref{lem_good_flats} apply for $d=\dim F_I$, $F_I$ is saturated; clearly $F_I$ is rich. So we can replace $\{F_i\}_{i\in I}$ by their union, $F_I$, to get a new collection of flats whose sum of dimensions decreases. If the sum of dimensions is strictly less than $d$, we repeat our argument using Beck's theorem  to find a new rich saturated flat. If the sum of dimensions is at least $d$ but the collection is still not good, again we can find a way to combine flats $F_I$ as above. This guarantees we will obtain a good collection of flats at some point. Using lemma \ref{lem_good_flats}, our space is saturated. \qed 
\\
\\
It remains to prove lemma \ref{lem_good_flats}, which will be the hardest part of this paper. We encourage readers to read lemma \ref{3skewlines}, a simple version where the good collection of flats are 3 pairwise skew lines, before proceeding any further. If you find some step in the following proof hard to follow,  think about what it means in the case of 3 skew lines. 
\\
\\
\textit{Proof of lemma \ref{lem_good_flats}:} There are two cases: when the sum of dimensions is $d$, and when the sum is strictly greater than $d$. Let us consider  case 1 first, as it is simpler. Case 2 is similar with modification at the last step.
\\
\\
\textbf{Case 1: $\sum_{i=1}^k a_i=d$}

 Heuristically, if we pick $a_i$ points in $S\cap F_i$ for each $i$ to form $\sum_i a_i=d$ points, those points are likely to generate an $S$-spanned hyperplane. There are $\sim n^{a_i}$ choices for points in $F_i$, and thus $\sim n^{\sum a_i}=n^d$ spanning hyperplanes. There are two things that may go wrong: $d$ chosen points may not generate a hyperplane, and the generated hyperplanes may not be distinct. In order to use the saturated of flats $F_i$, instead of picking $a_i$ points, let us pick  an $S$- spanned hyperplanes $P_i$ in each $F_i$. Since $F_i$ is saturated, there are still $\sim n^{a_i}$ choices for $P_i$. 
%hope that the span of those $P_i$ will form a hyperplane $H$ of $\R^d$. Since $F_i$ is rich and saturated there are many choice for each $P_i$ and thus create many hyperplanes. The problem is that those hyperplanes may not be distinct. 
One way to make sure $\langle P_1,\dots,P_k\rangle=:H$ are distinct is to choose $P_i$ so that $H\cap F_i=P_i$. That motivates the following definition:
\begin{defi}
In $\R^d$, given a good collection of flats $\{F_1,\dots, F_k\}$ w.r.t. $S$, a sequence of flats $\{P_1,\dots, P_k\}$ where $P_i$ is a hyperplane of $F_i$ is a \textbf{nice sequence} if $\langle P_1,\dots, P_k\rangle=H$ is a spanning hyperplane of $\R^d$ and $H\cap F_i=P_i$.
\end{defi}
Clearly each nice sequence generates a distinct spanning hyperplane. Indeed, assume $\{P_1,\dots, P_k\}$ and $\{Q_1,\dots, Q_k\}$ are two nice sequences that generate a same hyperplane $H$. Then $P_i=F_i\cap H=Q_i$ for all $i$, so two sequences are the same.
It remains to show there are $\gtrsim n^d$ distinct nice sequences.
As in lemma \ref{3skewlines}, 
we  shall pick  $P_i$ one at a time in a careful manner. We use the following notations: $F_I:=\langle \{F_i\}_{i\in I}\rangle$; $P_I=\langle \{P_i\}_{i\in I}\rangle$; $a_I=\sum_{i\in I} a_i$ and $[n]=\{1,\dots, n\}$. 

For $s=1,\dots, k$, assume we have picked $P_1,\dots, P_{s-1}$. When $s=1$ it means we have not picked any flat yet. We now choose a spanning hyperplane $P_s$ of $F_s$ such that: $\langle P_s, P_I, F_J\rangle=\langle F_s, P_I, F_J\rangle$ for any $I\subset [s-1], J\subset [k]\setminus I$ that satisfies $\langle P_I, F_J\rangle \cap F_s\neq \emptyset$.
\\
\\
\textit{Claim 1:} 
There are at least $\mu_s n^{a_s}$ choices for such $P_s$ with some positive constant $\mu_s$.
\\
\\
\textit{Proof of claim 1:}
 We count how many spanning hyperplanes in $F_s$ that we cannot pick. For any $I\subset [s-1], J\subsetneq [s+1,\dots, k]$ such that $\langle P_I, F_J\rangle \cap F_l= Q_{I,J}\neq \emptyset$. Any hyperplane $P$ in $F_1$ that does not contain $Q_{I,J}$ satisfies our condition because $\langle Q_{I,J}, P_s\rangle$ is strictly bigger than $P_s$, hence must be $F_s$. The number of $S$-spanned hyperplanes in $F_s$ that contains a fixed point is bounded by $n^{a_s-1}$, hence the number of excluded hyperplanes is $\lesssim_d n^{a_s-1}$. Since $F_s$ is  $\gamma_{a_s}$-saturated, $H_S(F_s)\geq \gamma_{a_s}n^{a_s}$, so for big enough $n$ there remains a portion of $n^{a_s}$ choices for $P_s$.\qed
\\
\\
\textit{Claim 2:} For any $I\subset [k]$ and $J\subset [k]\setminus I$ we have:
\begin{equation}\label{condition_dim}
\dim\langle P_I, F_J\rangle = \begin{cases} 
a_{I}-1 &\mbox{if } J=\emptyset  \\ 
\geq a_{I\cup J} & \mbox{if }  J\neq \emptyset  \end{cases}   
\end{equation}
In particular, the sequence $\{P_i\}_{i\in[k]}$ is a nice one.
\\
\\
\textit{Proof of claim 2:}
%Assume we have picked $P_1,\dots, P_{s-1}$ that satisfy \eqref{condition_dim} for $I\subset [s-1]$. When $s=1$, it means we have not picked any hyperplane, and condition \eqref{condition_dim} holds as the collection $\{F_i\}$ is good. We now choose a spanning hyperplane $P_s$ of $F_s$ such that: $\langle P_s, P_I, F_J\rangle=\langle F_s, P_I, F_J\rangle$ for any $I\subset [s-1], J\subset [k]\setminus I$ that satisfies $\langle P_I, F_J\rangle \cap F_s\neq \emptyset$. We will show that condition \ref{condition_dim} still holds for $I\subset [s]$ and there are $\sim n^{a_s}$ choices for such $P_s$.
We prove \eqref{condition_dim} holds for any $I\subset [s]$ by induction by $s$. When $s=0$, $I=\emptyset$, condition \eqref{condition_dim} becomes $\dim F_J\geq a_J$ which is true as the collection $\{F_i\}$ is good. Assume  \eqref{condition_dim} holds for any $I\subset [s-1]$, we will show that it still holds for any $I\subset [s]$. Clearly we only need to consider the case $s\in I$.

If $\langle P_{I\setminus s},F_J\rangle \cap F_s=\emptyset$, clearly $\langle P_{I\setminus s}, F_J\rangle\cap P_s=\emptyset$, thus $\dim \langle P_I, F_J\rangle= \dim \langle P_{I\setminus s},
F_J\rangle + \dim P_s+1\geq a_{I\cup J\setminus s}+(a_s-1)+1=a_{I\cup J}$ by lemma \ref{projective_space_union_flats}. If on the other hand, $\langle P_{I\setminus s},F_J\rangle \cap F_s\neq \emptyset$, by our choice of $P_s$, $\langle P_I, F_J\rangle = \langle P_{I\setminus s}, F_{J\cup s}\rangle \geq a_{I\cup J}$ as \eqref{condition_dim} holds up to $s-1$.

By induction, $\dim P_{[s-1]} =a_{[s-1]}-1$ and $\dim \langle P_{[s-1]} F_s\rangle\geq a_{[s]}. $ This implies $\dim \langle P_{[s-1]}, F_s\rangle > \dim P_{[s-1]}+\dim F_s$. As a consequence, we must have $P_{[s-1]}\cap F_s=\emptyset$ by lemma \ref{projective_space_union_flats}. Thus $\dim P_{[s]}=\langle P_{[s-1]}, P_s\rangle = \dim P_{[s-1]}+\dim P_s+1= a_{[s]}-1$ as we wished.

Finally we prove $\{P_i\}_i$ is a nice sequence. Let $H:=P_{[k]}$, then $H$ is a hyperplane as $\dim H=a_{[k]}-1=d-1$. For any $i\in [k]$, $H\cap F_i$ has codimension at most 1 in $F_i$, hence either $H\cap F_i=F_i$ or $P_i$. If there is some $i$ such that $H\cap F_i\neq P_i$, then $F_i\subset H$, and $\langle F_i, P_{[k]\setminus i}\rangle\subset H$. However, by \eqref{condition_dim}, $\dim \langle F_i, P_{[k]\setminus i}\rangle\geq a_{[k]}=d$, contradiction. 
\qed
%We now have hyperplanes $P_1,\dots, P_k$ that satisfy \eqref{condition_dim}. It remains to pick the last hyperplane $P_k$ of $F_k$. 
%(not necessarily $S-$spanned) hyperplane $P_k$ of $F_k$ such that $H=P_[k]$ is a hyperplane in $\mathbb{R}^d$ and $H\cap F_i=P_i$ for all $i$, i.e. $H$ contains no flat $F_i$.
%If $a_{[k]}=d$, we can choose $P_k$ an $S-$spanned hyperplane of $F_k$ that does not contain any $Q_i=\langle F_i, P_{[k-1]\setminus \{i\}}\rangle\cap F_k$. As in the above claim, there are at least $c_kn^{a_k}$ choices for $P_k$, and thus define a total number of $\sim n^{a_{[k]}}=n^d$ distinct hyperplanes.
\\
\\
\textbf{Case 2:} The sum of dimensions of good flats is strictly bigger than $d$. We will start with a simple example to inspire the general solution.  

\textbf{Example:} $\{F_1,F_2,F_3\}$ in $\R^8$, each of dimensions three, pairwise non-intersecting and $\langle F_1,F_2,F_3\rangle =\R^5$. Heuristically, we can no longer take a spanning hyperplane in each flat because if we pick 3 generic points of $S$ in each flat to form a plane, those 9 points may span the whole space. Instead, we should take 3 points in $S\cap F_1$, 3 points in $S\cap F_2$ and only 2 points in $S\cap F_3$. As in Case 1, we can find many $S$-spanned planes $P_1\subset F_1$ and $P_2\subset F_2$ such that $\langle P_1,P_2,F_3\rangle=\R^8$ and $\dim \langle P_1,P_2\rangle=5$. By lemma \ref{projective_space_union_flats}, $\langle P_1,P_2\rangle\cap F_3$ at some point $Q$, not necessarily a point of $S$. Let $Q_1:=\langle P_1, F_2\rangle \cap F_3$ and $Q_2:=\langle F_1, P_2\rangle \cap F_3$, then by dimension counting $Q_1,Q_2$ are two lines in $F_3$ and $Q_1\cap Q_2=Q$. 

If there is a plane $P_3$ in $F_3$ that contains $Q$ and an $S$-spanned line $l$ but does not contain $Q_1,Q_2$, then we can check that $H:=\langle P_1,P_2,P_3\rangle$ is a spanned hyperplane of $\R^8$ and $H\cap F_i=P_i$ for $i=1,2,3$. Indeed, $\dim H= \dim \langle P_1,P_2\rangle +\dim P_3=5+2=7$; $H$ is $S$-spanned because we can find 3 points in $S\cap P_1$, 3 points in $S\cap P_2$ and 2 points in $S\cap l$ to form 8 points of $S\cap H$ in general position. To prove $H\cap F_i=P_i$, we prove $H$ does not contain $F_i$. For $i=1,2$, $H$ does not contain $F_i$ because $P_3$ does not contain $Q_i$. For $i=3$, if $F_3\subset H$, $H=\langle P_1, P_2, F_3\rangle =\R^8$ contradiction. 

It remains to count how many choices there are for $P_3$. In $F_3$ which we shall treat as the space $\R^3$, consider the projection map $\pi: F_3 \to F$ where $x\mapsto \langle Q, x\rangle \cap F$. Pick $F$ generic so that $\# \pi(S\cap F_3)\sim n$. Since $F_3$ is saturated, those points define $\sim n^2$ distinct lines. Excluding $\pi(Q_1), \pi(Q_2)$, those statement remain unchanged. The span of $Q$ with any of those $\pi(S\cap F_3)$-spanned lines form a plane $P_3$ that satisfies our condition. There are $\sim n^2$ choices for $P_3$, combine with $\sim n^3$ choices for each $P_1,P_2$ we have $\sim n^8$ spanning hyperplanes.

\textbf{Back to our general case:} $\{F_i\}_{i=1}^k$ is a good collection of flats and $\sum_{i=1}^k a_i=d+x$ for some $x\geq 1$.
Observe that $a_{[k-1]}\leq d-1$ because otherwise we should have considered the collection $\{F_1,\dots, F_{k-1}\}$ instead. As a consequence, $x\leq a_k-1$. We first pick a sequence $\{P_1,\dots, P_{k-1}\}$ of spanning flats as in case 1. Those flats satisfy for any $I\subset[k-1]$:
\begin{equation}
\dim\langle P_I, F_J\rangle = \begin{cases} 
a_{I}-1 &\mbox{if } J=\emptyset  \\ 
\geq a_{I\cup J} & \mbox{if }  J\neq \emptyset, J\subsetneq [k]\setminus I \\
d &\mbox{if} J=[k]\setminus I\end{cases} 
\end{equation}

Now we pick a hyperplane $P_k$ of $F_k$  not necessarily $S$-spanned to form a nice sequence $\{P_1,\dots, P_k\}$, i.e. $H:=\langle P_1,\dots, P_k\rangle$ is an $S$-spanned hyperplane of $\R^d$ and $H\cap F_i=P_i$.
  
Since $\dim P_{[k-1]}=a_{[k-1]}-1$, by lemma \ref{projective_space_union_flats}, $P_{[k-1]}$ intersects $F_k$ at some $(x-1)$-dim flat $Q$. For each $i\in[k-1]$, $\langle F_i, P_{[k-1]\setminus \{i\}}\rangle$ intersects $F_k$ at some $x$-dim flat $Q_i$ which contains $Q$. As in the example, any $P_k$ that contains $Q$ and an $S$-spanned $(a_k-x-1)$-flat in $F_k$ but does not contain $Q_i$ for $i=1,\dots, k-1$ will satisfy our condition. The proof is quite simple and completely similar to that in the example, hence we will omit it here.

In $F_k$ which is equivalent to $\R^{a_k}$, consider a map $\pi$ which is a projection from $Q$ to some generic $(a_k-x)$-dim flat $F$ such that most points of $S\cap F_k$ remain distinct. By dimension counting, each $Q_i$ is projected to a point $q_i$ in $F$. Excluding those $k-1$ points, there remains a portion of $n$ points in $F$.
As $F_k$ is $S$-saturated, we must have $F$ is $\pi(S)$-saturated. In other words, there are $\gtrsim n^{a_k-x}$ flats of dimension $(a_k-x-1)$ that are spanned by $\pi(S)$. The span of $Q$ with each of these flats will generate a hyperplane $P_k$ satisfy our conditions. Hence we have $\gtrsim n^{a_1+\dots+a_{k-1}+a_k-x}=n^d$ distinct hyperplanes in $\mathbb{R}^d$.
This concludes our proof of lemma \ref{lem_good_flats}.\qed

\section{Extension and future work}
In this paper we have generalized the Erd\H{o}s-Beck theorem to higher dimensions as stated in theorems \ref{E-B_d_dim} and \ref{thm_d_dim}. It implies a stronger version of Beck's theorem (corollary \ref{cor_beta_d}) and has some application in incidence geometry. Here are some final thoughts: %The first version is theorem \ref{E-B_d_dim}: if there is a saturated hyperplane which contains $n-x$ points of $S$ (assume $n-x\geq c_1n$ for some positive constant $c_1$) then the number of spanning hyperplanes is $\Omega(xn^{d-1})$.   The second result is a generalization of corollary \ref{Beck2} and Beck's theorem: for any $\beta\in(0,1)$, there is $\gamma$ such that either a collection of flats whose sum of dimension is less than $d$ contains at least $\beta n$ points of $S$ or the number of spanning hyperplanes is $\Theta (n^d)$. In the future we wish to answer the following questions:
\begin{enumerate}

%\item Theorem 2 in \cite{Lund} is a strong result  
%Estimate number of spanning $k$-flats for some $k<d-1$: we can project $S$ to a generic $(k+1)$-dim flat in $\R^d$ to reduce the problem to estimating the number of spanning hyperplanes in $\R^{k+1}$.

\item What happens in other fields? Proof of Beck's theorem uses (a weaker version of) Szemer\'edi-Trotter theorem. Since Szemer\'edi-Trotter theorem still holds in complex fields as proved in \cite{Josh} and \cite{Toth}, we can easily extend Beck's theorem to $\mathbb{C}^d$ and all the results in this paper can be extended as well.  As mentioned before, only partial result is known in finite fields. We wonder whether this partial result can be extended to high dimensions in any way using the techniques in this paper.

\item What is the best bound for $\beta_d$ when $d\geq 4$? In corollary \ref{cor_beta_d} we show that any $\beta_d<1/(d-1)$ works in Beck's theorem. This bound is tight when $d=2$ and 3, but it may not be tight for $d\geq 4$. In $\R^4$, if we choose 3 pair-wise skew lines each contains $n/3$ points, some hyperplane will contain two lines, and thus $2n/3$ points. We conjecture that the best bound for $\beta_4$ is $\beta_4<1/2$ by choosing a line and a plane in general position, each contains $n/2$ points. In general, we suspect we can find best bound for $\beta_d$ by carefully analysing all possibilities of  flats whose sum of dimensions is less than $d$.

\item Matroidal version: we can think of the plane as a simple matroid, a line is a $2-flat$, a maximal set of rank 2.  In a simple matroid, most essential properties of points and lines still holds: two lines intersects at at most 1 point, 2 distinct points define at most a line. However, Beck's theorem may not hold in finite fields, we suspect it may not hold in matroids either.

\item In paper \cite{Sharir}, Apfelbaum and Sharir used results about incidences between points and degenerate hyperplanes in \cite{Eleke} to show that if the number of incidences between $n$ points and $m$ arbitrary hyperplanes is big enough, the incidence graph must contain a large complete bipartite subgraph. We wonder if our new version of this result, corollary \ref{new-eleke}, would yield any better result.

%\item Theorem \ref{thm_d_dim} lets us know what to expect when $H_S(\R^d)=o(n^d)$. It might be interesting to know exactly when $H_S(\R^d)=\Theta(n^k)$ for some fixed $k<d$.  
\end{enumerate}
\section*{Acknowledgement}
The author would like to thank Larry Guth for suggesting this problem and for his tremendous help and support throughout the project. The author also thanks Ben Yang, Josh Zahl, Hannah Alpert, Richard Stanley and Nate Harman for helpful conversations.

\end{document}